\newcommand{\N}{\mathbb{N}}
\newcommand{\R}{\mathbb{R}}
\newcommand{\PP}{\mathbb{P}}
\newcommand{\EE}{\mathbb{E}}
\newcommand{\e}{\varepsilon}
\newcommand{\D}{\Delta}
\newcommand{\dd}{{\rm d}}
\spnewtheorem{assumption}{Assumption}{\bf}{\it}
\journalname{}
\begin{document}

\title{Simulation of SPDE's for Excitable Media using Finite Elements \thanks{This work was supported by the Agence Nationale de la Recherche through the project MANDy, Mathematical Analysis of Neuronal Dynamics, ANR-09-BLAN-0008-01}}

\author{Muriel Boulakia \and Alexandre Genadot \and Michèle~Thieullen}
\institute{ M. Boulakia \at
              Sorbonne Universités, UPMC Univ Paris 06, UMR 7598, Laboratoire Jacques-Louis Lions, F-75005, Paris, France.\\ 
CNRS, UMR 7598, Laboratoire Jacques-Louis Lions, F-75005, Paris, France.\\ 
INRIA-Paris-Rocquencourt, EPC REO, Domaine de Voluceau, BP105, 78153 Le Chesnay Cedex.\\
\email{boulakia@ann.jussieu.fr}           %  \\
%             \emph{Present address:} of F. Author  %  if needed
           \and
           A. Genadot \at
           Université Paris-Dauphine, UMR 7534, Centre De Recherche en Mathématiques de la Décision, 75775, Paris, France.\\
\email{algenadot@gmail.com}
		\and
M. Thieullen \at
           Sorbonne Universités, UPMC Univ Paris 06, UMR 7599, Laboratoire de Probabilités et Modèles Aléatoires, F-75005, Paris, France.\\
\email{michele.thieullen@upmc.fr} 
}

\date{Received: date / Accepted: date}
\maketitle
\begin{abstract}
In this paper, we address the question of the discretization of Stochastic Partial Differential Equations (SPDE's) for excitable media. Working with SPDE's driven by colored noise, we consider a numerical scheme based on finite differences in time (Euler-Maruyama) and finite elements in space. Motivated by biological considerations, we study numerically the emergence of reentrant patterns in excitable systems such as the Barkley or Mitchell-Schaeffer models.

\keywords{Stochastic partial differential equation \and Finite element method \and Excitable media}
\subclass{60H15 \and 60H35 \and 65M60}
\end{abstract}

%%%%%%%%%%%%%%%%%%%%%%%%%%%%%%%%%%%%%%%%%%%%%%%%%%%%%%%%
%%%%%%%%%%%%%%%%%%%%%%%%%%%%%%%%%%%%%%%%%%%%%%%%%%%%%%%%
%%%%%%%%%%%%%%%%%%%%%%%%%%%%%%%%%%%%%%%%%%%%%%%%%%%%%%%%
%%%%%%%%%%%%%%%%%%%%%%%%%%%%%%%%%%%%%%%%%%%%%%%%%%%%%%%%

\section{Introduction}

The present paper is concerned with the numerical simulation of Stochastic Partial Differential Equations (SPDE's) used to model excitable cells in order to analyze the effect of noise on such biological systems. Our aim is twofold. The first is to propose an efficient and easy-to-implement method to simulate this kind of models. We focus our work on practical numerical implementation with software used for deterministic PDE's such as FreeFem++ \cite{FF} or equivalent. The second is to analyze the effect of noise on these systems thanks to numerical experiments. Namely, in models for cardiac cells, we investigate the possibility of purely noise induced reentrant patterns such as spiral or scroll-waves as these phenomena are related to major troubles of the cardiac rhythm such as tachyarrhythmia. For numerical experiments, we focus on the Barkley and Mitchell-Schaeffer models, both originally deterministic models to which we add a noise source. \\
Mathematical models for excitable systems may describe a wide range of biological phenomena. Among these phenomena, the most known and studied are certainly the two following ones: the generation and propagation of the nerve impulse along a nerve fiber and the generation and propagation of a cardiac pulse in cardiac cells. For both, following the seminal work \cite{HH52}, very detailed models known as conductance based models have been developed, describing the physio-biological mechanism leading to the generation and propagation of an action potential. These physiological models are quite difficult to handle mathematically and phenomenological models have been proposed. These models describe qualitatively the generation and propagation of an action potential in excitable systems. For instance, the Morris-Lecar model for the nerve impulse and the Fitzhugh-Nagumo model for the cardiac potential. In the present paper, we will consider two phenomenological models: the Barkley and the Mitchell-Schaeffer models.\\
Mathematically, the models that we will consider consist in a degenerate system of Partial Differential Equations (PDE's) driven by a stochastic term, often referred to as noise. More precisely, the model may be written
\begin{equation}\label{eq_model}
\left\{
\begin{array}{ccl}
{\rm d}u&=&[\nu\D u+\frac1\e f(u,v)]{\rm d}t+\sigma {\rm d} W,\\
\dd v&=&g(u,v)\dd t,
\end{array}
\right.
\end{equation}
on $[0,T]\times D$, where $D$ is a regular bounded open set of $\R^2$ or $\R^3$. This system is completed with boundary and initial conditions. $W$ is a colored Gaussian noise source which will be defined more precisely later. System (\ref{eq_model}) is degenerate in two ways: there is no spatial operator such as the Laplacian neither noise source in the equation on $v$. All the considered models have the features of classical stochastic PDEs for excitable systems. The general structure of $f$ and $g$ is also typical of excitable dynamics. In particular, in the models that we will consider, the neutral curve $f(u,v)=0$ for $v$ held fixed is cubic in shape.\\
To achieve our first aim, that is to numerically compute a solution of system (\ref{eq_model}), we work with a numerical scheme based on finite difference discretization in time and finite element method in space. The choice of finite element discretization in space has been directed by two considerations. The first is that this method fits naturally to a general spatial domain: we want to investigate the behavior of solutions to (\ref{eq_model}) on domains with various geometry. The second is that it allows to numerically implement the method with popular software used to simulate deterministic PDE's such as the finite elements software FreeFem++ or equivalent. The discretization of SPDE by finite differences in time and finite elements in space has been considered by several authors in theoretical studies, see for example \cite{DePr09,CYY07,Kr14,KL12,LT11,Wa05}. Other methods of discretization are considered for example in \cite{ANZ98,J09,JR10,KLL10,LT10,Y05}. These methods are based on finite difference discretization in time coupled either to finite difference in space or to the Galerkin spectral method, or to the finite element method on the integral formulation of the evolution equation. We emphasize that we do not consider in this paper a Galerkin spectral method or exponential integrator, that is, roughly speaking, we neither use the spectral decomposition of the solution of (\ref{eq_model}) according to a Hilbert basis of $L^2(D)$ (or another Hilbert space related to $D$) nor the semigroup attached to the linear operator (the Laplacian in (\ref{eq_model})), in order to build our scheme. We only use the variational formulation of the problem in order to fit to commonly used finite elements method for deterministic PDE's, see \cite{BSK81,CL91,GMV12}. Moreover, the present paper is more oriented toward numerical applications than the above cited papers, in the spirit of \cite{Sh05}. In \cite{Sh05}, the author numerically analyzes the effect of noise on excitable systems thanks to a Galerkin spectral method of discretization on the square. In the present paper, we pursue the same objective using the finite element method instead of the Galerkin spectral one. We believe that the finite element method is easier to adapt to various spatial domains. Let us notice that a discretization scheme for SPDE's driven by white noise for spatial domains of dimension greater or equal to 2 may lead to non trivial phenomena, see \cite{HRW12}. Considering colored noises may also be seen as a way to circumvent these difficulties.\\
As is well known, one can consider two types of errors related to a numerical scheme for stochastic evolution equations: the strong error and the weak error. The strong error for the discretization that we consider has been analyzed for one dimensional spatial domains (line segments) in \cite{Wa05}. The weak error for more general spatial domains, of dimension 2 or 3 for example, has been considered in \cite{DePr09}. In the present paper, we recall results about the strong error of convergence of the scheme because we want to numerically investigate pathwise properties of the model.  Working with spatial domains of dimension $d$, as obtained in \cite{Kr14,KL12}, the strong order of convergence of the considered method for a class of linear stochastic equations is twice less than the weak order obtained in \cite{DePr09}. This is what is expected since this same duality between weak and strong order holds for the discretization of finite dimensional stochastic differential equations (SDE's).\\
Our motivation for considering systems such as (\ref{eq_model}) comes from biological considerations. In the cardiac muscle, tachyarrhythmia is a disturbance of the heart rhythm in which the heart rate is abnormally increased. This is a major trouble of the cardiac rhythm since it may lead to rapid loss of consciousness and to death. As explained in \cite{Hi02,JC06}, the vast majority of tachyarrhythmia are perpetuated by a reentrant mechanism. In several studies, it has been observed that deterministic excitable systems of type (\ref{eq_model}) are able to generate sustained reentrant patterns such as spiral or meander, see for example \cite{K80,B90}. We show numerically that reentrant patterns may be generated and perpetuated only by the presence of noise. We perform the simulations on the Barkley model whose deterministic version has been intensively studied in \cite{B90,B92,B94} and the model of Mitchell-Schaeffer which allows to get more realistic shape for the action potential in cardiac cells \cite{MS03,BCFGZ10}. For Barkley model, similar experiments are presented in \cite{Sh05} where Galerkin spectral method is used as simulation scheme on a square domain. In our simulations, done on a square with periodic conditions or on a smoothed cardioid, we observe two kinds of reentrant patterns due to noise: the first may be seen as a scroll wave phenomenon whereas the second corresponds to spiral phenomenon. Both phenomena may be regarded as sources of tachyarrhythmia since in both cases, areas of the spatial domain are successively activated by the same wave which re-enters in the region.\\
All the simulations in the present paper have been performed using the FreeFem++ finite element software, see \cite{FF}. This software offers the advantage to provide the mesh of the domain, the corresponding finite element basis and to solve linear problems related to the finite element discretization of the model on its own. One of the originalities of the present work is to use this software to simulate stochastic PDE's.\\
Let us emphasize that the generic model (\ref{eq_model}) is endowed with a timescale parameter $\e$. The presence of this parameter is fundamental for the observation of traveling waves in the system: $\e$ enforces the system to be either quiescent or excited with a sharp transition between the two states. Moreover, the values of the timescale parameter $\e$ and the strength of the noise $\sigma$ appear to be of first importance to obtain reentrant patterns. This fact is also pointed out by our numerical bifurcation analysis. Let us mention that noise induced phenomena have been studied in \cite{BG06} for finite dimensional system of stochastic differential equations. The theoretical study of slow-fast SPDEs, through averaging methods, has been considered in \cite{Br12,CF09,WR12} for SPDEs.\\
In a forthcoming work, we plan to address the effect of noise on deterministic periodic forcing of the Barkley and Mitchell-Schaeffer models.  We expect to observe as in \cite{JT10} for the one dimensional case, the annihilation by weak noise of the generation of some waves initiated by deterministic periodic forcing. We also want to investigate stochastic resonance phenomena in such a situation. On a theoretical point of view, we intend to derive the strong order of convergence of the discretization method used in the present paper for non-linear equations and systems of equations such as the FitzHugh-Nagumo, Barkley or Mitchell-Schaeffer models but also on simplified conductance based models.\\
The remainder of the paper is organized as follows. In Section \ref{sect_noise}, we begin with the definition of the noise source in system (\ref{eq_model}) and present its finite element discretization. In Section \ref{sect_heat_FHN}, we introduce  a  discretization scheme based on finite element method in space for a stochastic heat equation and we recall the estimates from \cite{Kr14,KL12} for the strong order of convergence. Then we apply the method to Fitzhugh-Nagumo model. In Section \ref{sect_arrhy}, we investigate the influence of noise on Barkley and Mitchell-Schaeffer models. We show that noise may initiate reentrant patterns which are not observable in the deterministic case. We also provide numerical bifurcation diagrams between the noise intensity $\sigma$ and the time-scale $\e$ of the models. At last, some technical proofs are postponed to the Appendix.
%%%%%%%%%%%%%%%%%%%%%%%%%%%%%%%%%%%%%%%%%%%%%%%%%%%%%%%%
%%%%%%%%%%%%%%%%%%%%%%%%%%%%%%%%%%%%%%%%%%%%%%%%%%%%%%%%
%%%%%%%%%%%%%%%%%%%%%%%%%%%%%%%%%%%%%%%%%%%%%%%%%%%%%%%%
%%%%%%%%%%%%%%%%%%%%%%%%%%%%%%%%%%%%%%%%%%%%%%%%%%%%%%%%

\section{Finite element discretization of $Q$-Wiener processes.}\label{sect_noise}

\subsection{Basic facts on $Q$-Wiener processes}\label{sect_Q}

Let $D$ be an open bounded domain of $\R^d$, $d=2$ or $3$, containing the origin and with polyhedral frontier. We denote by $L^2(D)$ the set of square integrable measurable functions with respect to the Lebesgue measure on $\R^d$. Writing $H$ for $L^2(D)$, we recall that $H$ is a real separable Hilbert space and we denote its usual scalar product by $(\cdot,\cdot)$ and the associated norm by $\|\cdot\|$. They are respectively given by
\[
\forall (\phi_1,\phi_2)\in H\times H,\quad (\phi_1,\phi_2)=\int_D \phi_1(x)\phi_2(x){\rm d}x,\quad \|\phi_1\|=\left(\int_D \phi_1(x)^2{\rm d}x\right)^{\frac12}.
\]
Let $Q$ be a non-negative symmetric trace-class operator on $H$. Let us recall the definition of a $Q$-Wiener process on $H$ which can be found in \cite{PeZa07}, Section 4.4, as well as the basic properties of such a process.
\begin{definition}
Let $Q$ be a non-negative symmetric trace-class operator on $H$. There exists a probability space $(\Omega,\mathcal{F},\PP)$ on which we can define a stochastic process $(W^Q_t,~t\in\R_+)$ on $H$ such that
\begin{itemize}
\item For each $t\in\R_+$, $W^Q_t$ is a $H$-valued random variable.
\item $W^Q$ starts at $0$ at time $0$: $W^Q_0=0_{H}$, $\PP$-a.s.
\item $(W^Q_t,~t\in\R_+)$ is a Lévy process, that is, it is a process with independent and stationary increments:
\begin{itemize}
\item Independent increments: for a sequence $t_1,\ldots,t_n$ of strictly increasing times, the random variables $W^Q_{t_2}-W^Q_{t_1},\ldots,W^Q_{t_n}-W^Q_{t_{n-1}}$ are independent.
\item Stationary increments: for two times $s<t$, the random variable $W^Q_{t}-W^Q_{s}$ has same law as $W^Q_{t-s}$.
\end{itemize}
\item $(W^Q_t,~t\in\R_+)$ is a Gaussian process: for any $t\in\R_+$ and any $\phi\in H$, $(W^Q_t,\phi)$ is a real centered Gaussian random variable with variance $t(Q\phi,\phi)$. 
\item $(W^Q_t,~t\in\R_+)$ is a $H$-valued pathwise continuous process, $\PP$-almost surely. 
\end{itemize}
\end{definition}
\noindent We recall the definition of non-negative symmetric linear operator on $H$ admitting a kernel.
\begin{definition}
A non-negative symmetric linear operator $Q:H\to H$ is a linear operator defined on $H$ such that
\[
\forall (\phi_1,\phi_2)\in H\times H, \quad (Q\phi_1,\phi_2)=(Q\phi_2,\phi_1),\quad (Q\phi_1,\phi_1)\geq0. 
\]
Let $q$ be a real valued integrable function on $D\times D$ such that
\begin{align*}
&\forall(x,y)\in \overline{D}\times \overline{D},\quad q(x,y)=q(y,x),\\
&\forall M\in\mathbb{N},\forall x_i,y_j\in \overline{D},\forall a_i\in\mathbb{R},i,j=1,\ldots M,\quad \sum_{i,j=1}^Mq(x_i,y_j)a_ia_j\geq0,
\end{align*}
that is $q$ is symmetric and non-negative definite on $\overline{D}\times \overline{D}$. We say that $Q$ has the kernel $q$ if
\[
\forall \phi\in H,\forall x\in D,\quad Q\phi(x)=\int_D\phi(y)q(x,y){\rm d}y.
\] 
\end{definition}
\noindent Let $Q: H\to H$ be a non-negative symmetric operator with kernel $q$. Then $Q$ is a trace class operator whose trace is given by
\[
{\rm Tr}(Q)=\int_D q(x,x){\rm d}x.
\]
For examples of kernels and basic properties of symmetric non-negative linear operators on Hilbert spaces, we refer to \cite{PeZa07}, Section 4.9.2 and Appendix A. Let us now state clearly our assumptions on the operator $Q$.
\begin{assumption}\label{cond_C}
The operator $Q$ is a non-negative symmetric operator with kernel $q$ given by
\[
\forall (x,y)\in \overline{D}\times \overline{D},\quad q(x,y)=C(x-y),
\]
where $C$ belongs to $\mathcal{C}^{2}(\overline{D})$ and is an even function on $\overline{D}$ satisfying:
\[
\forall M\in\mathbb{N},\forall x_i,y_j\in \overline{D},\forall a_i\in\mathbb{R},i,j=1,\ldots M,\quad \sum_{i,j=1}^MC(x_i-y_j)a_ia_j\geq0.
\]
Particularly, $\nabla C(0)=0$ and $x\mapsto \frac{C(x)-C(0)}{|x|^2}$ is bounded on a neighborhood of zero.
\end{assumption}
\noindent For $x\in \overline{D}$ and $t\in\R_+$, one can show, see \cite{PeZa07}, Section 4.4, that we can define $W^Q_t$ at the point $x$ such that the process $(W^Q_t(x),(t,x)\in\R_+\times \overline{D})$ is a centered Gaussian process with covariance between the points $(t,x)$ and $(s,y)$ given by
\[
\EE\left(W^Q_t(x)W^Q_s(y)\right)=t\wedge s~q(x,y).
\]
In this case, the correlations in time are said to be white whereas the correlations in space are colored by the kernel $q$. 
\begin{proposition}\label{Cont_Bruit}
Under Assumption \ref{cond_C}, the process $(W^Q_t(x),(t,x)\in\R_+\times \overline{D})$ has a version with continuous paths in space and time.
\end{proposition}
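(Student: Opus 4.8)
The plan is to apply the Kolmogorov--Chentsov continuity theorem to the Gaussian random field $(W^Q_t(x),\,(t,x)\in[0,T]\times\overline{D})$ for an arbitrary fixed horizon $T>0$; since continuity on each $[0,T]$ can be patched together over $T\in\N$, this yields a version that is continuous on all of $\R_+\times\overline{D}$. Because the field is centered and Gaussian, every increment $W^Q_t(x)-W^Q_s(y)$ is a centered Gaussian variable, so its $2p$-th moment is controlled by the $p$-th power of its variance via the identity $\EE|Z|^{2p}=\frac{(2p)!}{2^p\,p!}(\EE Z^2)^p$. The whole proof therefore reduces to estimating the variance of a single increment and then choosing $p$ large enough to beat the dimension $1+d$ of the parameter set.

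First I would compute this variance explicitly from the covariance $\EE(W^Q_t(x)W^Q_s(y))=(t\wedge s)\,q(x,y)$ with $q(x,y)=C(x-y)$. Assuming $s\le t$ one obtains
\begin{equation*}
\EE|W^Q_t(x)-W^Q_s(y)|^2 = t\,C(0)+s\,C(0)-2s\,C(x-y) = C(0)\,(t-s)+2s\bigl(C(0)-C(x-y)\bigr),
\end{equation*}
which separates a purely temporal contribution, bounded by $C(0)\,|t-s|$ uniformly in everything else, from a purely spatial one.

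The spatial part is where Assumption \ref{cond_C} enters, and this is the main point of the argument. Since $C\in\mathcal{C}^2(\overline{D})$ is even and satisfies $\nabla C(0)=0$, a second-order Taylor expansion at the origin together with the stated boundedness of $x\mapsto(C(x)-C(0))/|x|^2$ near zero, and the compactness of $\overline D$ away from zero, provides a constant $K>0$ with $|C(0)-C(x-y)|\le K|x-y|^2$ for all $x,y\in\overline D$. Hence $2s(C(0)-C(x-y))\le 2TK|x-y|^2$ on $[0,T]$, and combining the two pieces gives
\begin{equation*}
\EE|W^Q_t(x)-W^Q_s(y)|^2 \le C_1\bigl(|t-s|+|x-y|^2\bigr),
\end{equation*}
with $C_1$ depending only on $C(0)$, $K$ and $T$.

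Finally I would raise this to the power $p$ through the Gaussian moment identity: for $(t,x),(s,y)$ in the compact set $[0,T]\times\overline D\subset\R^{1+d}$ with Euclidean separation $r:=|(t,x)-(s,y)|\le 1$, one has $|t-s|+|x-y|^2\le r+r^2\le 2r$, whence
\begin{equation*}
\EE|W^Q_t(x)-W^Q_s(y)|^{2p}\le C_p\,\bigl(|t-s|+|x-y|^2\bigr)^p\le C_p'\,r^{p}.
\end{equation*}
Choosing any integer $p>1+d$ makes the exponent $p$ strictly larger than the dimension $1+d$ of the parameter space, which is exactly the hypothesis of Kolmogorov--Chentsov; the theorem then supplies a modification with almost surely (Hölder) continuous paths in $(t,x)$. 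The only delicate step is the uniform quadratic bound on $C(0)-C(x-y)$, everything else being the routine Gaussian-plus-Kolmogorov machinery; note also that the crude estimate $|t-s|\le r$ discards the natural parabolic scaling, which is harmless here since only continuity, and not a sharp Hölder exponent, is claimed.
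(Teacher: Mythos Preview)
Your proposal is correct and follows exactly the approach the paper indicates: the paper's proof consists of the single sentence that this is an easy application of the Kolmogorov--Chentsov test, noting that the regularity of $C$ is what makes it work. You have simply written out the details of that application---the variance computation, the quadratic bound $|C(0)-C(x-y)|\le K|x-y|^2$ from the $\mathcal{C}^2$ regularity and evenness of $C$, and the Gaussian moment trick to reach an exponent exceeding the parameter dimension---so there is nothing to compare.
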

\begin{proof}
This is an easy application of the Kolmogorov-Chentsov test, see \cite{DaZa92} Chapter 3, Section 3.2. Note that the regularity of $C$ is important to get the result. \qed
\end{proof}
\begin{remark}\label{Diff_Bruit2}
The more the kernel $q$ smooth is, the more the Wiener process regular is. For example, let $\vec \iota\in \R^d$ and define $f(x)=\vec{\iota}\cdot{\rm Hess}C(x)\vec{\iota}$ for $x\in \overline{D}$. Suppose that $f$ is a twice differentiable function. Then, one can show that there exists a probability space on which $(W^Q_t,~t\in\R_+)$ is twice differentiable in the direction $\vec \iota$.
\end{remark}
\begin{remark}
Let us assume that there exists a constant $\alpha$ and a (small) positive real $\delta$ such that
\[
\forall y\in \overline{D},\quad |C(0)-C(y)|\leq \alpha|y|^{2+\delta}. 
\]
Using the Kolmogorov-Chentsov continuity theorem, one can show that the process \[(W^Q_t(x),(t,x)\in \R_+\times \overline{D})\] has a modification which is $\gamma_1$-Hölder in time for all $\gamma_1\in\left(0,\frac12\right)$ and $\gamma_2$-Hölder in space for all $\gamma_2\in\left(0,1+\frac{\delta}{2}\right)$. Thus if $\delta>0$, by Rademacher theorem, for $\gamma_2=1$, this version is almost everywhere differentiable on $\overline{D}$. This is another way to obtain regularity in space for $W^Q$ without using another probability space.
\end{remark}

\subsection{Finite element discretization}\label{section_fe}

In this part, we assume that $Q$ satisfies Assumption \ref{cond_C}. Let us present our approximation of the $Q$-Wiener process $W^Q$. We begin with the discretization of the domain $D$. Let $\mathcal{T}_h$ be a family of triangulations of the domain $D$ by triangles ($d=2$) or tetrahedra ($d=3$). The size of $\mathcal{T}_h$ is given by
\[
h=\max_{T\in\mathcal{T}_h} h(T),
\]
where $h(T)=\max_{x,y\in T}|x-y|$ is the diameter of the element $T$. We assume that there exist a positive constant $\rho$ such that
\begin{equation}\label{hyp_triangle}
\forall h>0,~\forall T\in\mathcal{T}_h,\quad\exists x\in T,\quad T\subset {\rm B}(x,\rho h),
\end{equation}
where ${\rm B}(x,r)$ stands for the euclidean ball centered at $x$ with radius $r$. We assume further that this triangulation is regular as in Figure \ref{Fig_mesh} (see \cite{RaTh83} p. 108 for a definition) where a triangulation is displayed and the property (\ref{hyp_triangle}) is illustrated.
\begin{figure}
\begin{center}
\begin{tabular}{cc}
\includegraphics[width=5cm]{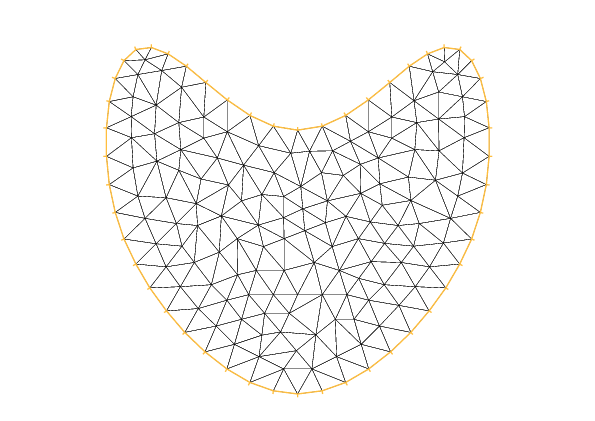}&\begin{tikzpicture}
\draw (1,1)--(2,3)--(3,1)--(1,1);
\draw[thick,blue] (2,1.75) circle (1.25);
\draw[thick] (2,1.75)--node[midway,above]{$\rho h$}(1,2.5);
\draw[white] (0,0)--(3,0);
\end{tikzpicture}
\end{tabular}
\end{center}
\caption{Meshing (triangulation) of a domain and illustration of (\ref{hyp_triangle})}\label{Fig_mesh}
\end{figure}
\noindent In the present work, we consider two kinds of finite elements: the Lagrangian P0 and P1 finite elements. However, the  method could be adapted to other finite elements. The basis associated to the P0 finite element method is
\[
\mathcal{B}_{0,\mathcal{T}_h}=\{1_T,T\in\mathcal{T}_h\},
\]
where the function $1_T$ denotes the indicator function of the element $T$. Let $\{P_i, 1\leq i\leq N_h\}$ be the set of all the nodes associated to the triangulation $\mathcal{T}_h$.  The basis for the P1 finite element method is given by
\[
\mathcal{B}_{1,\mathcal{T}_h}=\{\psi_i,1\leq i\leq N_h\},
\]
where $\psi_i$ is the continuous piecewise affine function on $D$ defined by $\psi_i(P_j)=\delta_{ij}$ (Kronecker symbol) for all $1\leq i,j\leq N_h$.
\begin{definition}\label{Def_app}
The P0 approximation of the noise $W^Q$ is given for $t\in\R_+$ by
\begin{equation}\label{noise_P0}
W^{Q,h,0}_t=\sum_{T\in\mathcal{T}_h}W^Q_t(g_T)1_T,
\end{equation}
where $g_T$ is the center of gravity of $T$. The P1 approximation is
\begin{equation}\label{noise_P1}
W^{Q,h,1}_t=\sum_{i=1}^{N_h}W^Q_t(P_i)\psi_i.
\end{equation}
We will also consider the following alternative choice for the P0 discretization
\begin{equation}\label{noise_P0_CYY}
W^{Q,h,0_a}_t=\sum_{T\in\mathcal{T}_h}\frac{1}{|T|}(W^Q_t,1_T)1_T.
\end{equation}
\end{definition}
\noindent $W^{Q,h,0_a}$ corresponds to an orthonormal projection on P0. These approximations are again Wiener processes as stated in the following proposition.
\begin{proposition}
For $i\in\{0,0_a,1\}$ the stochastic processes $(W^{Q,h,i}_t,~t\in\R_+)$ are centered $Q^{h,i}$-Wiener processes where, for $\phi\in H$
\begin{eqnarray*}
Q^{h,0}\phi&=&\sum_{T,S\in\mathcal{T}_h}(1_T,\phi)q(g_T,g_S)1_S,\\
Q^{h,0_a}\phi&=&\sum_{T,S\in\mathcal{T}_h}(1_T,\phi)\frac{(Q1_T,1_S)}{|T||S|}1_S\\
\end{eqnarray*}
and
\[
Q^{h,1}\phi=\sum_{i,j=1}^{N_h}(\psi_i,\phi)q(P_i,P_j)\psi_{j}.
\]
\end{proposition}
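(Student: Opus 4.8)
The plan is to verify, for each $i\in\{0,0_a,1\}$, the defining properties of a $Q^{h,i}$-Wiener process listed above, reducing each one to the corresponding property of the underlying process $W^Q$. The essential observation is that each approximation takes values in the fixed finite-dimensional subspace $V_h\subset H$ spanned by $\Ba_{0,\mathcal{T}_h}$ (resp. $\Ba_{1,\mathcal{T}_h}$), and is obtained from $W^Q$ by applying coordinatewise a fixed family of linear functionals: the point evaluations $\phi\mapsto\phi(g_T)$ and $\phi\mapsto\phi(P_i)$, or the integrals $\phi\mapsto(\phi,1_T)$. Several properties are then immediate. Since $W^Q_0=0_H$ we have $W^Q_0(g_T)=W^Q_0(P_i)=(W^Q_0,1_T)=0$, so each approximation vanishes at $t=0$; each approximation is a finite combination of real random variables times fixed functions of $H$, hence is $H$-valued; and since $t\mapsto W^Q_t(x)$ is continuous by Proposition \ref{Cont_Bruit} and $t\mapsto(W^Q_t,1_T)$ is continuous, the coefficient processes are continuous, so $t\mapsto W^{Q,h,i}_t$ is continuous in $V_h$, where all norms are equivalent.

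The heart of the argument is the Gaussian property together with the identification of the covariance operator. Fix $\phi\in H$ and $t\in\R_+$. For the P0 approximation,
\[
(W^{Q,h,0}_t,\phi)=\sum_{T\in\mathcal{T}_h}W^Q_t(g_T)(1_T,\phi),
\]
a finite linear combination of the jointly centered Gaussian variables $W^Q_t(g_T)$, hence itself a centered Gaussian variable. Using $\EE\left(W^Q_t(g_T)W^Q_t(g_S)\right)=t\,q(g_T,g_S)$, its variance is
\[
\EE\big[(W^{Q,h,0}_t,\phi)^2\big]=t\sum_{T,S\in\mathcal{T}_h}(1_T,\phi)\,q(g_T,g_S)\,(1_S,\phi)=t\,(Q^{h,0}\phi,\phi),
\]
which is exactly the required variance. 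The P1 case is identical with $g_T$ replaced by $P_i$ and $1_T$ by $\psi_i$. For the alternative P0 approximation the coefficients are the genuine $H$-inner products $(W^Q_t,1_T)$; polarizing the variance identity $\EE[(W^Q_t,\psi)^2]=t(Q\psi,\psi)$ yields $\EE[(W^Q_t,1_T)(W^Q_t,1_S)]=t\,(Q1_T,1_S)$, and summing gives $\EE[(W^{Q,h,0_a}_t,\phi)^2]=t\,(Q^{h,0_a}\phi,\phi)$. This pins down each covariance operator.

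It remains to treat the increments and to check that each $Q^{h,i}$ is a legitimate covariance operator. Each $Q^{h,i}$ has finite rank since its range lies in $V_h$, hence is trace-class; symmetry follows by relabelling indices and using $q(x,y)=q(y,x)$ (resp. the symmetry of $Q$), and non-negativity is read off from $(Q^{h,i}\phi,\phi)=\EE[(W^{Q,h,i}_t,\phi)^2]/t\ge0$ for $t>0$. For the increments, note that $W^{Q,h,i}_t-W^{Q,h,i}_s$ is, coordinatewise, a linear functional of the increment of the coefficient process over $[s,t]$. For the $0_a$ scheme this is literally a linear functional of $W^Q_t-W^Q_s$, so independence and stationarity are inherited directly from $W^Q$. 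For the P0 and P1 schemes one instead works with the jointly Gaussian pointwise process: using $\EE\left(W^Q_t(x)W^Q_s(y)\right)=t\wedge s\,q(x,y)$, a short computation shows that increments over disjoint time intervals have vanishing cross-covariance and that the covariance of an increment over $[s,t]$ depends only on $t-s$. The main careful point is that, because the whole family is jointly Gaussian, these covariance computations upgrade to true mutual independence of the increment vectors and to equality in law of $W^{Q,h,i}_t-W^{Q,h,i}_s$ with $W^{Q,h,i}_{t-s}$, which is what the Lévy and stationarity conditions demand. Collecting these facts establishes that $(W^{Q,h,i}_t,\,t\in\R_+)$ is a centered $Q^{h,i}$-Wiener process.
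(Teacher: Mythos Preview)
Your proof is correct and follows essentially the same approach as the paper: both rely on the observation that each approximation is a fixed linear functional of the Wiener process $W^Q$, so the Wiener-process properties are inherited, and the covariance operator is identified by computing $\EE[(W^{Q,h,i}_t,\phi_1)(W^{Q,h,i}_t,\phi_2)]$ (you do this via the variance and polarization, which is equivalent). The paper's own proof is a two-line sketch leaving the details to the reader; your argument simply fills those details in.
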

\begin{proof}
The fact that for $i\in\{0,0_a,1\}$ the stochastic processes $(W^{Q,h,i}_t,~t\in\R_+)$ are Wiener processes is a direct consequence of their definition as linear functionals of the Wiener process $(W^Q_t,~t\in\R_+)$, see Definition \ref{Def_app}. The corresponding covariance operators are obtained by computing the quantity
\[\EE((W^{Q,h,i}_1,\phi_1)(W^{Q,h,i}_1,\phi_2))
\]
for $i\in\{0,0_a,1\}$ and $\phi_1,\phi_2\in H$ (the details are left to the reader). \qed
\end{proof}
\noindent The P0 approximation (\ref{noise_P0_CYY}) of the noise has been considered for white noise in dimension 2 in \cite{CYY07}. White noise corresponds to $Q={\rm Id}_{H}$. In the white noise case, the associated Wiener process is not at all regular in space (the trace of $Q$ is infinite in this case). In the present paper, we work with trace class operators and thus with noises which are regular in space. Notice that discretization schemes for SPDE driven by white noise for spatial domains of dimension greater or equal to 2 may lead to non trivial phenomena. In particular, usual schemes may not converge to the desired SPDE, see \cite{HRW12}.
\begin{theorem}[A global error]\label{prop_glob_err}
For any $\tau\in\R_+$ and $i\in\{0,0_a,1\}$ we have
\[
\EE\left(\sup_{t\in[0,\tau]}\|W^Q_t-W^{Q,h,i}_t\|^2\right)\leq K\tau h^2
\]
where $K$ may be written $K=\tilde{K}\|{\rm Hess}~C\|_{\infty}$ for some constant $\tilde{K}$ which only depends on $|D|$.
\end{theorem}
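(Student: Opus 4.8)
The plan is to exploit the fact that, for each $i\in\{0,0_a,1\}$, the error process $M^i_t:=W^Q_t-W^{Q,h,i}_t$ is again a centered, continuous, $H$-valued martingale with independent increments. Indeed, each approximation in Definition \ref{Def_app} is obtained by applying a fixed (time-independent) bounded linear map to $W^Q_t$, so $M^i$ is the image of the $Q$-Wiener process under a fixed linear operator and inherits the martingale property with respect to the natural filtration of $W^Q$; its continuity follows from that of $W^Q$ established in Proposition \ref{Cont_Bruit}. Consequently $t\mapsto\|M^i_t\|$ is a nonnegative real submartingale, and Doob's $L^2$ maximal inequality yields
\[
\EE\left(\sup_{t\in[0,\tau]}\|M^i_t\|^2\right)\leq 4\,\EE\left(\|M^i_\tau\|^2\right).
\]
This reduces the whole statement to estimating the single second moment $\EE(\|M^i_\tau\|^2)$ at the terminal time.

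Next I would compute this second moment pointwise. By Fubini's theorem,
\[
\EE\left(\|M^i_\tau\|^2\right)=\int_D \EE\left(\left(W^Q_\tau(x)-W^{Q,h,i}_\tau(x)\right)^2\right){\rm d}x,
\]
so everything comes down to a uniform bound on the integrand. The key input is the covariance identity $\EE(W^Q_\tau(x)W^Q_\tau(y))=\tau\,q(x,y)=\tau\,C(x-y)$ stated before Proposition \ref{Cont_Bruit}. In the P0 case, for $x$ in a triangle $T$ one has $W^{Q,h,0}_\tau(x)=W^Q_\tau(g_T)$, and expanding the square shows the integrand to be exactly $2\tau\big(C(0)-C(x-g_T)\big)$. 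In the two remaining cases one uses that $W^{Q,h,1}_\tau(x)=\sum_i\psi_i(x)W^Q_\tau(P_i)$ with $\sum_i\psi_i(x)=1$ (partition of unity), respectively $W^{Q,h,0_a}_\tau(x)=\tfrac1{|T|}\int_T W^Q_\tau(y){\rm d}y$; writing the error as a convex combination (respectively an average) of increments $W^Q_\tau(x)-W^Q_\tau(z)$ and applying Minkowski's inequality in $L^2(\Omega)$ bounds the integrand by $\max_z 2\tau\big(C(0)-C(x-z)\big)$, where $z$ ranges over points at distance at most $h$ from $x$.

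The final step is the Taylor estimate that produces the factor $h^2$ and the constant $\|{\rm Hess}\,C\|_\infty$. Since $C\in\mathcal{C}^2(\overline{D})$ is even with $\nabla C(0)=0$ (Assumption \ref{cond_C}), a second-order Taylor expansion of $C$ about the origin, in which the first-order term vanishes, gives $|C(0)-C(z)|\leq\tfrac12\|{\rm Hess}\,C\|_\infty|z|^2$ for $z\in\overline{D}$. In each of the three cases the relevant displacement $z$ (namely $x-g_T$, or $x-P_i$ for a vertex $P_i$ of the triangle containing $x$, or $x-y$ with $y$ in that triangle) satisfies $|z|\leq h(T)\leq h$ by the definition of the mesh size, so the integrand is bounded by $\tau\|{\rm Hess}\,C\|_\infty h^2$ uniformly in $x$. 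Integrating over $D$ gives $\EE(\|M^i_\tau\|^2)\leq|D|\,\tau\,\|{\rm Hess}\,C\|_\infty h^2$, and combining with the Doob bound yields the claim with $\tilde K=4|D|$.

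I expect the only genuinely delicate points to be bookkeeping ones: justifying that $M^i$ is a continuous $H$-valued martingale so that Doob's inequality applies (this rests on the independent-increment structure of $W^Q$ and the continuity from Proposition \ref{Cont_Bruit}), and verifying that the convex-combination and averaging reductions in the P1 and $0_a$ cases indeed produce the same $h^2$ bound uniformly across all three discretizations. The analytic heart of the estimate, by contrast, is elementary once the covariance identity and the vanishing of $\nabla C(0)$ are in hand.
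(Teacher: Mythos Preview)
Your proof is correct and shares the paper's overall architecture: reduce the supremum to the terminal time via a martingale maximal inequality, then bound the pointwise $L^2(\Omega)$-variance of the error using the covariance identity and a second-order Taylor expansion of $C$ at the origin (where $\nabla C(0)=0$). The paper invokes the Burkholder--Davis--Gundy inequality for the reduction step, whereas you use Doob's $L^2$ maximal inequality; both are adequate here and yours even yields an explicit constant.

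The genuine difference lies in the terminal-time estimate for the P1 and $0_a$ cases. For P1, the paper expands the square directly to obtain the double sum
\[
\tau\sum_{i,j}\int_D\big(C(0)-C(P_i-x)-C(P_j-x)+C(P_i-P_j)\big)\psi_i(x)\psi_j(x)\,{\rm d}x
\]
and bounds each bracket by Taylor. For $0_a$, the paper exploits that $W^{Q,h,0_a}$ is the orthogonal projection of $W^Q$ onto P0 and uses the Pythagorean theorem to obtain an \emph{exact} formula for $\EE(\|D_h(\tau)\|^2)$ before estimating. Your route---writing the error as a convex combination (respectively an average) of increments $W^Q_\tau(x)-W^Q_\tau(z)$ and applying Minkowski/Jensen in $L^2(\Omega)$---is more elementary and treats all three discretizations uniformly, at the modest cost of giving up the exact identity that the orthogonal-projection structure affords in the $0_a$ case. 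Note that your Minkowski step in the P1 case tacitly uses $\psi_i\geq 0$, which does hold for the P1 Lagrange basis (barycentric coordinates) but would fail for higher-order elements; the paper's double-sum argument does not need this sign condition.
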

\begin{proof}
The proof is postponed to Appendix \ref{app_bgt_1}.\qed
\end{proof}
\noindent Let us comment the above result. Let us take, as it will be the case in the numerical experiments, the following special form for the kernel
\[
\forall x\in D,\quad C_\xi(x)=\frac{a}{\xi^2}e^{-\frac{b}{\xi^2}|x|^2}
\]
for three positive real numbers $a,b,\xi$, where $D$ is a bounded domain in dimension $2$. This is a so-called Gaussian kernel. To a particular $\xi$-dependent kernel $C_\xi$, we associate the corresponding  $\xi$-dependent covariance operator $Q_\xi$. Then, according to Theorem \ref{prop_glob_err}, in this particular case, we see that
\[
\EE\left(\sup_{t\in[0,\tau]}\|W^{Q_\xi}_t-W^{Q_\xi,h,i}_t\|^2\right)={\rm O}\left(\tau \frac{h^2}{\xi^4}\right)
\]
for any $\tau\in\R_+$. Thus, when $\xi$ goes to zero, this estimation becomes useless since the right hand-side goes to infinity. In fact, when $\xi$ goes to zero, $C_\xi$ converges in the distributional sense to a Dirac mass, and $W^{Q_\xi}$ tends to a white noise which is, as mentioned before, an irregular process. In particular, the white noise does not belong to $H$ and this is why our estimation is no longer useful in this case. The same phenomenon occurs for any bounded domain of dimension $d\geq2$. Let us mention that for white noise acting on steady PDEs and on particular domains (square and disc), the error considered in Theorem \ref{prop_glob_err} have been studied in \cite{CYY07}: the regularity of the colored noised improved these estimates in our case. We also remark that the proof of Theorem \ref{prop_glob_err} does not rely on the regularity of the functions of the finite element basis of P0 or P1 here. The key points are that $C$ is smooth enough, even and that $\sum_i\phi_i=1$, where $\{\phi_i\}$ corresponds to the finite element basis. \\
\noindent To conclude this section, we display some simulations. In Figure \ref{Fig_bruit_P1} we show simulations of the noise $W^{Q_\xi}_1$ with covariance kernel defined by
\begin{equation}\label{kernel_xi}
\forall (x,y)\in D\times D,\quad q_\xi(x,y)=C_\xi(x-y)=\frac{1}{4\xi^2}e^{-\frac{\pi}{4\xi^2}|x-y|^2},
\end{equation}
where $\xi>0$. We use the same kernel as in \cite{Sh05} for comparison purposes. As already mentioned, when $\xi$ goes to zero, the considered colored noise tends to a white noise. On the contrary, when $\xi$ increases, the correlation between two distinct areas increases as well. This property is illustrated in Figure \ref{Fig_bruit_P1}. In other words, $\xi$ is a parameter which allows to control the spatial correlation.\\
In these simulations, we have discretized $W^{Q_\xi,h,1}$ with the P1 discretization which reads 
\[
W^{Q_\xi,h,1}_1=\sum_{i=1}^{N_h}W^{Q_\xi}_1(P_i)\psi_i.
\]
We remark that the family $\{W^{Q_\xi}_1(P_i),1\leq i\leq N_h\}$ is a centered Gaussian vector with covariance matrix $(q_\xi(P_i,P_j))_{1\leq i,j\leq N_h}$. Using some basic linear algebra, it is not difficult to simulate a realization of this vector and to project it on the P1 finite element basis to obtain Figure \ref{Fig_bruit_P1}.
\begin{figure}
\begin{center}
\begin{tabular}{cc}
\includegraphics[width=3.5cm]{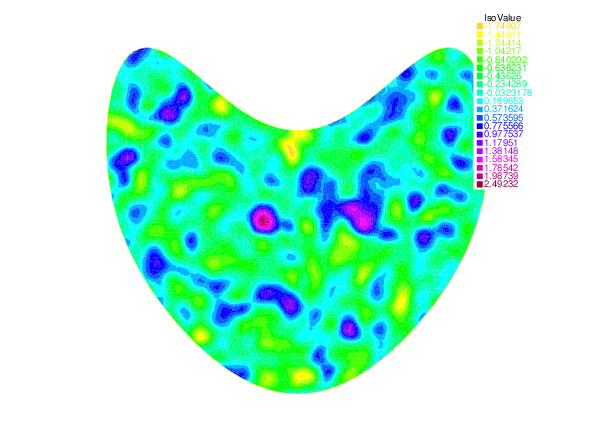}&\includegraphics[width=3.5cm]{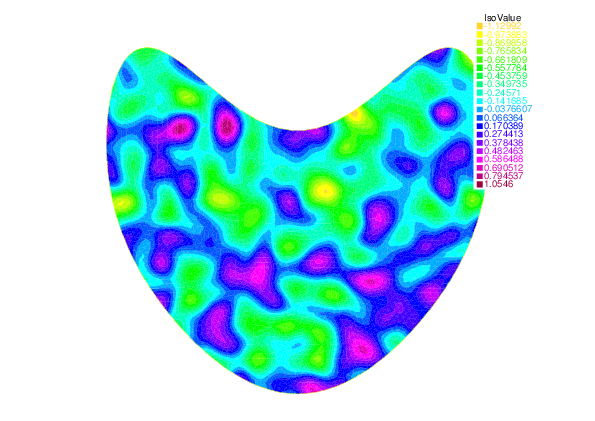}\\
\includegraphics[width=3.5cm]{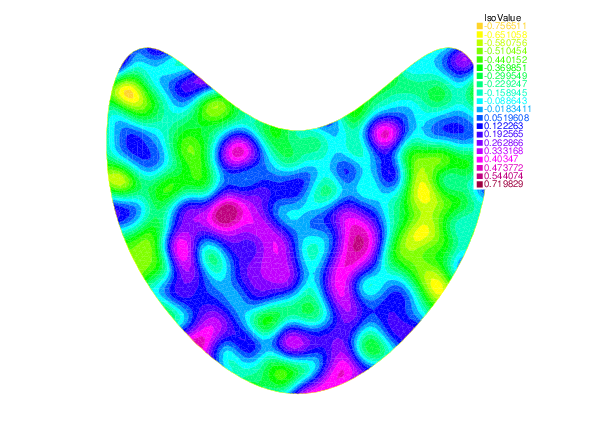}&\includegraphics[width=3.5cm]{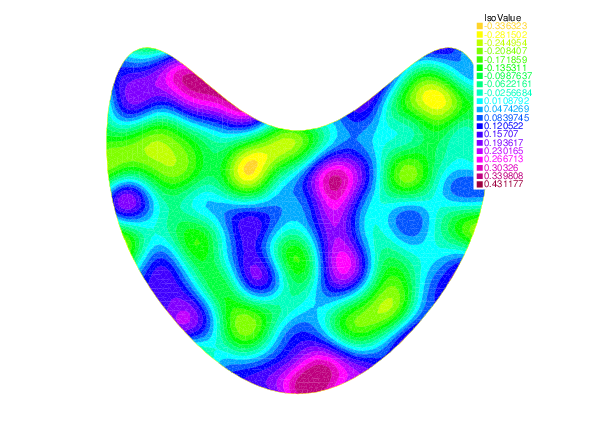}
\end{tabular}
\caption{Simulations of $W^{Q_\xi}_1$ with $\xi=1,1.5,2,3$ with P1 finite elements.}\label{Fig_bruit_P1}
\end{center}
\end{figure}

\noindent We now propose a log-log graph to illustrate the estimate of Theorem \ref{prop_glob_err}. In the case where $\overline{D}$ is the square $[0,1]\times[0,1]$, let us consider the kernel $q$ given by:
\[
\forall (x,y)\in\overline{D}\times \overline{D},\quad q(x,y)=f_{k_0p_0}(x)f_{k_0p_0}(y),
\]
where for two given integers $k_0,p_0\geq1$, $f_{k_0p_0}(x)=2\sin(k_0\pi x_1)\sin(p_0\pi x_2)$ (if $x=(x_1,x_2)\in\overline{D}$). Then, the covariance operator is given by $Q\phi=(\phi,f_{k_0p_0})f_{k_0p_0}$ for any $\phi\in L^2(\overline{D})$. Moreover, one can show that
\[
\forall t\geq0,\quad W^Q_t=\beta_t f_{k_0p_0}
\]
for some real-valued Brownian motion $\beta$. All the calculations are straightforward in this setting. Suppose, in the finite element setting, that the square $\overline{D}$ is covered by $2N^2$ triangles, $N\in\N$. For any $N\in\N$, we denote by $W^{Q,N,0}_{1}$ the P0 approximation of $W^Q_1$ given by (\ref{noise_P0}). We show in Figure \ref{fig_P0_err} the log-log graph of the (discrete) function: 
\[
N\mapsto \mu_N=\EE(\|W^Q_1-W^{Q,N,0}_{1}\|^2).
\]
According to Theorem \ref{prop_glob_err}, we should have $\mu_N={\rm O}\left(\frac{1}{2N^2}\right)$. This result is recovered numerically in Figure \ref{fig_P0_err}. 
\begin{figure}
\begin{center}
\includegraphics[width=3.5cm]{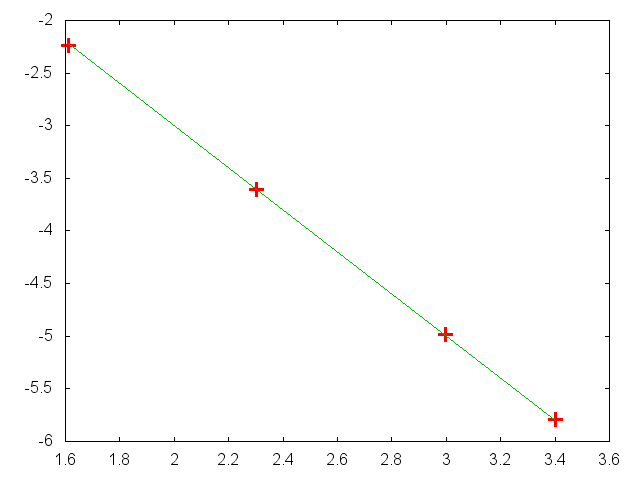}
\end{center}
\caption{Log-log graph of $N\mapsto \mu_N$ for $N=5,10,20,30$. In green is the comparison with a line of slope $-2$.}\label{fig_P0_err}
\end{figure}
%%%%%%%%%%%%%%%%%%%%%%%%%%%%%%%%%%%%%%%%%%%%%%%%%%%%%%%%
%%%%%%%%%%%%%%%%%%%%%%%%%%%%%%%%%%%%%%%%%%%%%%%%%%%%%%%%
%%%%%%%%%%%%%%%%%%%%%%%%%%%%%%%%%%%%%%%%%%%%%%%%%%%%%%%%
%%%%%%%%%%%%%%%%%%%%%%%%%%%%%%%%%%%%%%%%%%%%%%%%%%%%%%%%

\section{Space-time numerical scheme}\label{sect_heat_FHN}

In this section, we first present our numerical scheme. The considered space-time discretization is based on the Euler scheme in time and on finite elements in space. In this section and in the next section, we will use the following notations. Let us fix a time horizon $T$. For $N\geq1$ we define a time step $\D t=\frac{T}{N}$ and denote by $(u_n,v_n)_{0\leq n\leq N}$ a sequence of approximations of the solution of (\ref{eq_model}) at times $t_n=n\D t$, $0\leq n\leq N$. The scheme, semi-discretized in time, is based on the following variational formulation, for $n\in\{0,\ldots,N-1\}$:
\begin{equation}\label{scheme_ex_weak}
\left\{
\begin{array}{ccl}
\left(\frac{u_{n+1}-u_n}{\D t},\psi\right)+\kappa(\nabla u_{n+1},\nabla\psi)&=&\frac{1}{\e}(f_n,\psi)+\frac{\sigma}{\D t}(W^Q_{n+1}-W^Q_n,\psi),\\
\left(\frac{v_{n+1}-v_n}{\D t},\phi\right)&=&(g_n,\phi)
\end{array}
\right.
\end{equation}
for $\psi$ and $\phi$ in appropriate spaces of test functions. Here, $f_n$ and $g_n$ correspond to approximations of the reaction terms $f$ and $g$ in (\ref{eq_model}). The way we compute $f_n$ and $g_n$ is detailed in the sequel for each considered model. $W^Q_{n}$ is an appropriate approximation of $W^Q_{t_n}$ based on one of the discretizations proposed in Definition \ref{Def_app}. \\
In Subsection \ref{sect_heat}, we consider the strong error in the case of a linear stochastic partial differential equation driven by a  colored noise to study the accuracy of the finite element discretization. We recall the estimates on the strong order of convergence for such a numerical scheme obtained in \cite{Kr14} and we numerically illustrate this result for an explicit example. In Subsection \ref{sect_FHN}, we present in more details the scheme for the Fitzhugh-Nagumo model with a colored noise source since this model is one of the most used phenomenological models in cardiac electro-physiology, see the seminal work \cite{Fi69} and the review \cite{LiGaNeSc04}.

\subsection{Linear parabolic equation with additive colored noise}\label{sect_heat}

Let us consider the following linear parabolic stochastic equation on $(0,T)\times D$
\begin{equation}\label{eq_l}
\left\{\begin{array}{ccl}
\dd u_t&=& A u_t\dd t+\sigma \dd W^{Q}_t,\\
u_0&=&\zeta .
\end{array}\right.
\end{equation}
Remember that $H=L^2(D)$ is a separable Hilbert space with the scalar product and the corresponding norm respectively denoted by $(\cdot,\cdot)$ and $\|\cdot\|$. We assume that $W^Q$ is a $Q$-Wiener process with an operator $Q$ which satisfies Assumption \ref{cond_C}. We impose the following condition on the operator $A$ in (\ref{eq_l}).
\begin{assumption}\label{ass_A}
The operator $-A$ is a positive self-adjoint linear operator on $H$ whose domain is dense and compactly embedded in $H$. 
\end{assumption}
\noindent It is well known that the spectrum of $-A$ is made up of an increasing sequence of positive eigenvalues $(\lambda_i)_{i\geq1}$. The corresponding eigenvectors $\{w_i,i\geq1\}$ form a Hilbert basis of $H$.\\

\noindent The domain of $(-A)^{\frac12}$ is the set
\[
\left\{u=\sum_{i\geq1}(u,w_i)w_i,\quad \sum_{i\geq1}\lambda_i(u,w_i)^2<\infty\right\},
\]
that we denote here by $V$. It is continuously and densely embedded in $H$. The $V$-norm is given by $|u|=\sqrt{-(Au,u)}$ for all $u\in V$. We define a coercive continuous bilinear form $a$ on $V\times V$ by
\[
a(u,v)=-(Au,v).
\]
Let $\zeta$ be a $V$-valued random variable. The following proposition states that problem (\ref{eq_l}) is well posed.
\begin{proposition}\label{mild_sol}
Equation (\ref{eq_l}) has a unique mild solution:
\[
u_t=e^{At}\zeta+\sigma\int_0^t e^{A(t-s)}\dd W^Q_s,
\]
Moreover $u$ is continuous in time and $u_t\in V$ for all $t\in[0,T]$, $\PP$-a.s.
\end{proposition}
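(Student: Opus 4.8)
The plan is to treat (\ref{eq_l}) as a linear stochastic evolution equation with additive noise and to apply the semigroup / mild-solution machinery of Da Prato--Zabczyk \cite{DaZa92}. Under Assumption \ref{ass_A}, $A$ generates a $C_0$-semigroup of contractions $(e^{At})_{t\geq0}$ which, in the eigenbasis $\{w_i\}$, acts as the Fourier multiplier $e^{At}\phi=\sum_{i\geq1}e^{-\lambda_i t}(\phi,w_i)w_i$. Once this is in place, the candidate for $u_t$ is simply the variation-of-constants formula, so the real content is to make sense of the stochastic convolution $W_A(t):=\int_0^t e^{A(t-s)}\,\dd W^Q_s$, to show it takes values in $V$, and to produce a time-continuous version. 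Uniqueness is then immediate, since by definition every mild solution coincides with the right-hand side.

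First I would check that $W_A(t)$ is a well-defined element of $L^2(\Omega;H)$. The stochastic integral exists as soon as $\int_0^t\|e^{A(t-s)}Q^{1/2}\|_{\rm HS}^2\,\dd s<\infty$, and since $\|e^{A\tau}\|_{\mathcal L(H)}\leq1$ this is bounded by $\int_0^t\|Q^{1/2}\|_{\rm HS}^2\,\dd s=t\,{\rm Tr}(Q)<\infty$, as $Q$ is trace class. The membership $W_A(t)\in V$ rests on the single computation, obtained by expanding the Hilbert--Schmidt norm in the eigenbasis,
\[
\int_0^t\|(-A)^{1/2}e^{A(t-s)}Q^{1/2}\|_{\rm HS}^2\,\dd s
=\sum_{i\geq1}(Qw_i,w_i)\,\frac{1-e^{-2\lambda_i t}}{2}
\leq\tfrac12\,{\rm Tr}(Q).
\]
The crucial point is the exact cancellation $\lambda_i\int_0^t e^{-2\lambda_i(t-s)}\,\dd s=(1-e^{-2\lambda_i t})/2$: the half-derivative gained from $(-A)^{1/2}$ is precisely compensated by the time integration, so that trace-class of $Q$ alone (with no extra spatial regularity) already yields $\EE\,|W_A(t)|^2<\infty$. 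Since $\zeta$ is $V$-valued and $\|(-A)^{1/2}e^{At}\zeta\|\leq|\zeta|$, the deterministic term $e^{At}\zeta$ also lies in $V$, whence $u_t\in V$ $\PP$-a.s.

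The remaining, most delicate point is pathwise continuity in time, which I would obtain through the factorization method of \cite{DaZa92}. Fixing $\alpha\in(0,\tfrac12)$, one writes
\[
W_A(t)=c_\alpha\int_0^t(t-s)^{\alpha-1}e^{A(t-s)}Y(s)\,\dd s,\qquad Y(s)=\int_0^s(s-\sigma)^{-\alpha}e^{A(s-\sigma)}\,\dd W^Q_\sigma,
\]
with $c_\alpha=\sin(\pi\alpha)/\pi$. The same Hilbert--Schmidt estimate as above, now with the singular weight, gives $\EE\,\|Y(s)\|^2\leq{\rm Tr}(Q)\int_0^s(s-\sigma)^{-2\alpha}\,\dd\sigma<\infty$ precisely because $\alpha<\tfrac12$; Gaussianity then upgrades this to $Y\in L^p(0,T;H)$ $\PP$-a.s.\ for every $p<\infty$. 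Choosing $p$ with $p\alpha>1$, the factorization operator maps $L^p(0,T;H)$ continuously into $C([0,T];H)$, which yields a continuous version of $W_A$; strong continuity of $(e^{At})$ handles $t\mapsto e^{At}\zeta$, and adding the two gives continuity of $u$. I expect the factorization step to be the main obstacle: well-definedness and the $V$-membership are direct second-moment computations, whereas continuity hinges on the interplay between the integrability threshold $\alpha<\tfrac12$ and the regularizing bound on the factorization operator. Alternatively, the whole statement can be read off from \cite{DaZa92}, Chapter~5, once Assumptions \ref{cond_C} and \ref{ass_A} are seen to meet its hypotheses.
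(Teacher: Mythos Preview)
Your proposal is correct and amounts to an explicit unpacking of the paper's one-line proof, which simply cites Theorem~5.4 of \cite{DaZa92} together with Assumptions~\ref{cond_C} and~\ref{ass_A}. The well-definedness of the stochastic convolution, the $V$-regularity via the second-moment bound $\EE\,|W_A(t)|^2\leq\tfrac12\,{\rm Tr}(Q)$, and the factorization argument for continuity are precisely the ingredients behind that theorem, as you yourself acknowledge in your closing remark.
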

\begin{proof}
This result is a direct consequence of Theorem 5.4 of \cite{DaZa92}, Assumptions \ref{cond_C} and \ref{ass_A}. \qed
\end{proof}
For $h>0$, let $V_h$ be a finite dimensional subset of $V$ with the property that for all $v\in V$, there exists a  sequence of elements $v_h\in V_h$ such that $\lim_{h\to 0}\|v-v_h\|=0$. For an element $u$ of $V$, we introduce its orthogonal projection on $V_{h}$ and denote it $\Pi_hu$. It is defined in a unique way by
\begin{equation}\label{def_proj}
\Pi_hu\in V_h\quad{\rm and}\quad\forall v_h\in V_{h},\quad a(\Pi_h u-u,v_h)=0.
\end{equation}
Let $I_h$ be the dimension of $V_h$. Notice that there exists a basis $(w_{i,h})_{1\leq i\leq I_h}$ of $V_h$ orthonormal in $H$ with the following property: for each $1\leq i\leq I_h$, there exists $\lambda_{i,h}$ such that
\[
\forall v_h\in V_h,\quad a(v_h,w_{i,h})=\lambda_{i,h}(v_h,w_{i,h}),
\]
(see \cite{RaTh83}, Section 6.4). The family $(\lambda_{i,h})_{1\leq i\leq I_h}$ is an approximating sequence of the family of eigenvalues $(\lambda_i)_{i\geq1}$ so that
\[
\lambda_{i,h}\geq \lambda_i,\quad\forall 1\leq i\leq I_h.
\]
We study the following numerical scheme to approximate equation (\ref{eq_l}) defined recursively as follows. For $u_0$ given in $V_h$, find $(u^h_n)_{0\leq n\leq N}$ in $V_h$ such that for all $n\leq N-1$
\begin{equation}\label{scheme_eq_l}
\left\{\begin{array}{rcl}
\frac{1}{\D t}(u^h_{n+1}-u^h_n,v_h)+a(u^h_{n+1},v_h)&=&\frac{\sigma}{\D t}(W^{Q,h}_{n+1}-W^{Q,h}_n,v_h)\\
u^h_0&=&u_0
\end{array}\right.
\end{equation}
for all $v_h\in V_h$ where $W^{Q,h}_n$ is an appropriate approximation of $W^Q_{n \D t}$ in $V_h$.
The approximation error of the scheme can be written as the sum of two errors.
\begin{definition}
The discrete error introduced by the scheme (\ref{scheme_eq_l}) is defined by $E^h_n=e^h_n+p^h_n$ where
\begin{equation}
e^h_n=u^h_n-\Pi_h u_{t_n},\quad p^h_n=\Pi_h u_{t_n}-u_{t_n}
\end{equation}
for $0\leq n\leq N$.
\end{definition}
For $n\in\{0,\ldots,N\}$, the error $e^h_n$ is the difference between the approximated solution given by the scheme and the elliptic projection on $V_{h}$ of the exact solution at time $n\D t$. The error $p^h_n$ is the difference between the exact solution and its projection on $V_{h}$ at time $n\D t$. \\

\noindent In order to give explicit bounds for the error defined above, let us choose our approximation in $H$ specifically. This imposes to choose the space $V$ explicitly. Assume that the operator $A$ is such that $V= H^1_0(D)$ and that $V_h$ is a space of P1 finite elements, see Section \ref{section_fe}. In this P1 case, for $n\in\{0,\ldots,N\}$, we set $W^{Q,h}_n=W^{Q,h,1}_{n\D t}$ defined by Definition \ref{Def_app}.
%\begin{proposition}\label{prop:proj}
%The projection error $p^h_n$ satisfies, for all $n\in\{1,\ldots,N\}$:
%\begin{equation}
%\sqrt{\EE(\|p^h_n\|^2)}\leq Kh
%\end{equation}
%for a constant $K$ depending only on $|D|$ and $T$.
%\end{proposition}
%\begin{proof}
%Since we are working with P1 finite elements and the domain $D$ is bounded and polyhedral, \textcolor{blue}{since $u\in\mathcal{C}([0,T],V)$ according to Proposition \ref{mild_sol}}, the result is a direct consequence of Lemma 6.5.1 of \cite{RaTh83}. \qed
%\end{proof}
The situation considered in the present section is also the one studied in example 3.4 and section 7 of \cite{Kr14}. We have the following bound for the numerical error coming from such a numerical scheme. 
\begin{theorem}[Example 3.4 and Corollary 7.2 in \cite{Kr14}]\label{cor:main}
Let us assume that Assumptions \ref{cond_C} and \ref{ass_A} are satisfied. Moreover, assume that we are in the P1 case: for $n\in\{0,\ldots,N\}$, $W^{Q,h}_n=W^{Q,h,1}_{n\D t}$ defined by Definition \ref{Def_app}. Then, there exists $\D t_0>0$ such that for all $n\in\{1,\ldots,N\}$ and $\D t\in[0,\D t_0]$
\begin{equation}\label{so}
\sqrt{\EE(\|E^h_n\|^2)}\leq K(h+\sqrt{\D t}),
\end{equation}
where $K$ is a constant depending only on $T$ and $|D|$.
\end{theorem}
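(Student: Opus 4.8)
The plan is to bound the two pieces of the decomposition $E^h_n=e^h_n+p^h_n$ separately in $L^2(\Omega;H)$, using
\[
\sqrt{\EE(\|E^h_n\|^2)}\le \sqrt{\EE(\|e^h_n\|^2)}+\sqrt{\EE(\|p^h_n\|^2)},
\]
and to prove $\EE(\|p^h_n\|^2)\le C h^2$ and $\EE(\|e^h_n\|^2)\le C(h^2+\D t)$. The regularity that governs the rates is encoded in the Hilbert--Schmidt norm $\|(-A)^{\frac12}Q^{\frac12}\|_{\mathrm{HS}}$: under Assumption \ref{cond_C} the kernel $C$ is $\mathcal{C}^2$ with $\nabla C(0)=0$ and $(C(x)-C(0))/|x|^2$ bounded near the origin, which is exactly what makes $\mathrm{Tr}((-A)^{\frac12}Q(-A)^{\frac12})$ finite (morally $-\D C(0)\,|D|$). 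I would record this as the standing quantitative input, since it is what allows the stochastic convolution to be controlled.

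For the projection error, I would first use the mild representation of Proposition \ref{mild_sol} together with the It\^o isometry to get a uniform spatial bound
\[
\EE(\|(-A)^{\frac12}u_{t_n}\|^2)\le \|(-A)^{\frac12}e^{At_n}\zeta\|^2+\sigma^2\int_0^{t_n}\|(-A)^{\frac12}e^{A(t_n-s)}Q^{\frac12}\|_{\mathrm{HS}}^2\,\dd s\le C,
\]
the integral being finite because $\|e^{Ar}\|\le 1$ and $\|(-A)^{\frac12}Q^{\frac12}\|_{\mathrm{HS}}<\infty$. Then the standard Aubin--Nitsche estimate for the elliptic projection on P1 elements, $\|(I-\Pi_h)w\|\le C h\,\|(-A)^{\frac12}w\|$, yields $\EE(\|p^h_n\|^2)\le C h^2\,\EE(\|(-A)^{\frac12}u_{t_n}\|^2)\le Ch^2$. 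This part is routine deterministic finite element theory once the stochastic regularity bound is in hand.

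For the scheme error I would set up the discrete dynamics. Define $A_h:V_h\to V_h$ by $(A_h\phi,\psi)=-a(\phi,\psi)$ and the backward--Euler resolvent $r_h=(I-\D t\,A_h)^{-1}$, which is a contraction on $H$ since the $\lambda_{i,h}$ are nonnegative. Taking $u^h_0=\Pi_h\zeta$ so that $e^h_0=0$, the scheme (\ref{scheme_eq_l}) unrolls to $u^h_n=\sigma\sum_{k=1}^n r_h^{\,n-k+1}(W^{Q,h}_{t_k}-W^{Q,h}_{t_{k-1}})$ up to the initial term. Comparing with $\Pi_h u_{t_n}$ and using the commutation $A_h\Pi_h=P_h A$ (with $P_h$ the $L^2$--projection), the error satisfies $e^h_n=\sum_{k=1}^n r_h^{\,n-k+1}\big[\sigma(\Delta W^{Q,h}_k-\Pi_h\Delta W^Q_k)-\rho_k\big]$, where $\rho_k$ is the weak time--consistency residual defined by $(\rho_k,v_h)=-\int_{t_{k-1}}^{t_k}a(u_s-u_{t_k},v_h)\,\dd s$. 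I would split $e^h_n$ into (I) the noise--space term $\sigma\sum_k r_h^{\,n-k+1}(\Delta W^{Q,h}_k-\Pi_h\Delta W^Q_k)$ and (II) the time--consistency term $\sum_k r_h^{\,n-k+1}\rho_k$. Term (I) is handled by summation by parts, the contraction property of $r_h$, and Theorem \ref{prop_glob_err}, which gives $\|W^Q_t-W^{Q,h}_t\|$ of order $h$ in $L^2(\Omega;H)$; this is precisely the contribution that caps the spatial order at $h^1$.

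The main obstacle is term (II), which carries the temporal rate. Here I would apply the It\^o isometry to the stochastic part of $u_s-u_{t_k}$, reducing $\EE(\|\text{(II)}\|^2)$ to sums of the form $\sum_k\int_{t_{k-1}}^{t_k}\|r_h^{\,n-k+1}P_h A\,e^{A(s-t_{k-1})}Q^{\frac12}\|_{\mathrm{HS}}^2\,\dd s$, and then balance the parabolic smoothing against the finite regularity $\|(-A)^{\frac12}Q^{\frac12}\|_{\mathrm{HS}}<\infty$. The delicate point is the near--diagonal behaviour as $k\to n$, where the discrete smoothing operator has no spare regularity and one must split off the last few steps and use stability rather than smoothing; carefully summing the resulting singular kernel against the $\frac12$--Hölder temporal regularity of the stochastic convolution is what produces the sharp $\D t^{1/2}$ and no better. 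An alternative, and in fact the route matching the citation, is to verify that the present P1/colored--noise setting satisfies the abstract hypotheses of \cite{Kr14} (finite noise regularity index $\beta=1$ and the discrete noise approximation estimate of Theorem \ref{prop_glob_err}) and to invoke Example~3.4 and Corollary~7.2 directly.
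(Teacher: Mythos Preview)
The paper does not actually prove this theorem: it is stated with the attribution ``Example 3.4 and Corollary 7.2 in \cite{Kr14}'' and is immediately followed by a discussion of the weak order from \cite{DePr09}, with no intervening argument. The paper's ``proof'' is therefore exactly what you describe in your last sentence: verify that Assumptions~\ref{cond_C} and~\ref{ass_A} place the problem inside Kruse's abstract framework (the key quantitative input being $\mathrm{Tr}((-A)^{1/2}Q(-A)^{1/2})<\infty$, which your first paragraph identifies correctly), and then quote the cited results. So at the level of the paper you are already done, and you recognize this.

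Your longer sketch is a plausible outline of how a self-contained proof in the spirit of \cite{Kr14} would run, but a few steps are loose as written. First, Term~(I) involves $\Delta W^{Q,h,1}_k-\Pi_h\Delta W^Q_k$, whereas Theorem~\ref{prop_glob_err} controls $W^Q-W^{Q,h,1}$; you need an extra triangle inequality together with the elliptic projection bound $\|(I-\Pi_h)W^Q_t\|\le Ch\,|W^Q_t|$ (which in turn requires $W^Q_t\in V$ a.s., guaranteed by Assumption~\ref{cond_C}). Second, ``summation by parts'' is not what handles Term~(I): the increments are independent, so the It\^o isometry plus contractivity of $r_h$ gives $\EE\|\text{(I)}\|^2\le \sigma^2\sum_k\EE\|\Delta W^{Q,h,1}_k-\Pi_h\Delta W^Q_k\|^2=O(Th^2)$ directly. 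Third, your error identity $e^h_n=\sum_k r_h^{\,n-k+1}[\ldots]$ and the definition of $\rho_k$ need a more careful derivation: projecting the mild solution with $\Pi_h$ does not interact cleanly with the stochastic convolution, and the usual route is to first pass to $P_h u_{t_n}$ (the $L^2$-projection) rather than $\Pi_h u_{t_n}$, or to work with the variation-of-constants formula on $V_h$ and accept an additional $\|(P_h-\Pi_h)u_{t_n}\|=O(h)$ term. None of these are fatal, but the sketch as it stands would not compile into a proof without filling these in; the paper sidesteps all of this by citing \cite{Kr14}.
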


\noindent Let us recall the weak order of convergence of the considered scheme obtained in \cite{DePr09} but under weaker assumptions. Since $C$ is a twice differentiable even function on $D$, $\Delta C$ is a bounded function on $D$ and therefore, according to \cite{DePr09} Theorem 3.1, for any bounded real valued twice differentiable function $\phi$ on $L^2(D)$, there exists a constant $K$ depending only on $T$ such that
\begin{equation}\label{wo}
|\EE(\phi(u^h_N))-\EE(\phi(u_T))|\leq K(h^{2\gamma}+\D t^\gamma)
\end{equation}
for a given $\gamma<1$. In our situation, it is more natural to consider the strong error since we study pathwise behavior. For the method that we consider, estimates for the strong error have been obtained for one dimensional spatial domains and white noise in \cite{Wa05}. Many papers exist for finite dimensional systems. The estimate of Theorem \ref{cor:main} lies in between these two types of studies. The noise is colored but the spatial domain may be of any dimension. Notice that the order of weak convergence (\ref{wo}) is twice the order of strong convergence (\ref{so}), as for finite dimensional stochastic differential equations.\\

\noindent In the end of this subsection, we illustrate this error estimate in a simple situation. We consider the domain $D=(0,l)\times (0,l)$ for $l>0$. We set $A=\Delta$ and $\mathcal{D}(A)=H^2(D)\cap H^1_0(D)$, thus $V=H^1_0(D)$.  That is we consider the equation:
\begin{equation}\label{heat}
\left\{
\begin{array}{ccl}
{\rm d}u_t&=&\D u_t{\rm d}t+\sigma \dd W^{Q}_t,\quad{\rm in}~D,\\
u_t&=&0,\quad{\rm on}~\partial D,\\
u_0&=&0,\quad{\rm in}~D
\end{array}
\right.
\end{equation}
for $t\in\R_+$.  $W^{Q}$ is a $L^2(D)$-valued $Q$-Wiener process defined by Definition \ref{cond_C} and $Q$ satisfies Assumption \ref{cond_C}. The initial condition is zero, hence the solution of the corresponding deterministic equation, without noise, is simply zero for all time. Following Proposition \ref{mild_sol}, equation (\ref{heat}) has a unique mild solution such that $u_t\in H^1_0(D)$ for all $t\in[0,T]$, $\PP$-almost surely. Moreover, $u$ has a version with time continuous paths and such that, for any time $T>0$:
\[
\sup_{t\in[0,T]}\EE(\|u_t\|^2_{H^1_0(D)})<\infty.
\]
\noindent We denote by $(e^{\Delta t},~t\geq0)$ the contraction semigroup associated to the operator $\D$. The mild solution to equation (\ref{heat}) is defined as the following stochastic convolution
\[
u_t=\sigma\int_0^te^{\D (t-s)}\dd W^{Q}_s
\]
for $t\in\R_+$, $\PP$-almost-surely. In order to compute the expectation of the squared norm of $u$ in $L^2(D)$ analytically and also as precisely as possible numerically, we define the Hilbert basis $(e_{kp},k,p\geq1)$ of $L^2(D)$ which diagonalizes the operator $\D$ defined on $\mathcal{D}(A)$. For $k,p\geq1$ and $(x,y)\in D$
\[
e_{kp}(x,y)=\frac{2}{l}\sin\left(\frac{k\pi}{l}x\right)\sin\left(\frac{p\pi}{l}y\right).
\]
A direct computation shows that $\D e_{kp}=-\lambda_{kp}e_{kp}$ where $\lambda_{kp}=\frac{\pi^2}{l^2}(k^2+p^2)$. In the basis $(e_{kp},k,p\geq1)$ of $L^2(D)$, the semigroup $(e^{\D t},~t\geq0)$ is given by
\[
e^{\D t}\phi=\sum_{k,p\geq1}e^{-\lambda_{kp} t}(\phi,e_{kp})e_{kp}
\]
for $t\in\R_+$ and $\phi\in L^2(D)$. Then for any $t\in\R_+$ (c.f. Proposition 2.2.2 of \cite{DaPa04})
\[
\EE(\|u_t\|^2)=\sigma^2\int_0^t{\rm Tr}\left(e^{2\D s}Q\right)\dd s=\sigma^2\sum_{k,p\geq1}\frac{1-e^{-2\lambda_{kp} t}}{2\lambda_{kp}}(Q e_{kp},e_{kp}).
\]
In the sequel, we write $\Gamma_t=\EE(\|u_t\|^2)$. The above series expansion can then be implemented and we can compare this result with $\EE(\|u^h_n\|^2)$ which is computed thanks to Monte-Carlo simulations. The Monte-Carlo simulation of $\EE(\|u^h_n\|^2)$ consists in considering $(u^{h,p}_n)_{1\leq p\leq P}$, $P\in\N$ a sequence of independent realizations of the scheme (\ref{scheme_eq_l}) and define
\begin{equation}
\Gamma^{(P)}_{n\D t}=\frac{1}{P}\sum_{p=1}^P\|u^{h,p}_n\|^2,
\end{equation}
the approximation of $\Gamma$ at time $n\D t$, $n\in\{0,\ldots,N\}$. We denote also by $\Gamma^{(P)}$ the continuous piecewise linear version of $\Gamma$. Figure \ref{Fig_var} displays numerical simulations of the processes $(\Gamma_t,~t\in\R_+)$ and $(\Gamma^{(P)}_t,~t\in\R_+)$. The simulations are done with $l=80$. Moreover the domain is triangulated with $5000$ triangles giving a space step of about $h=0.64$ and a number of vertices's of about $2600$. For this simulation, we choose $P=40$ which is not big but $\Gamma^{(40)}$ matches quite well with its corresponding theoretical version $\Gamma$, as expected by the law of large numbers. We remark also that for the same spatial discretization of the domain $D$, there is no particular statistical improvement to choose the P1 finite element basis instead of the P0.
%This may be explained by the fact that when considering a particular triangulation of a domain, there is generally much more triangles than vertices's. Therefore, the covariance matrix associated to the P0 discretization of the noise is much more big in dimension than the one associated to the P1 discretization. This results in a priori better spatial statistical properties for the P0 approximation. However, the P1 approximation compensates for this drawback by the better regularity of the elements of its corresponding finite elements basis.
\begin{figure}
\begin{center}
\begin{tabular}{cc}
\includegraphics[width=5cm]{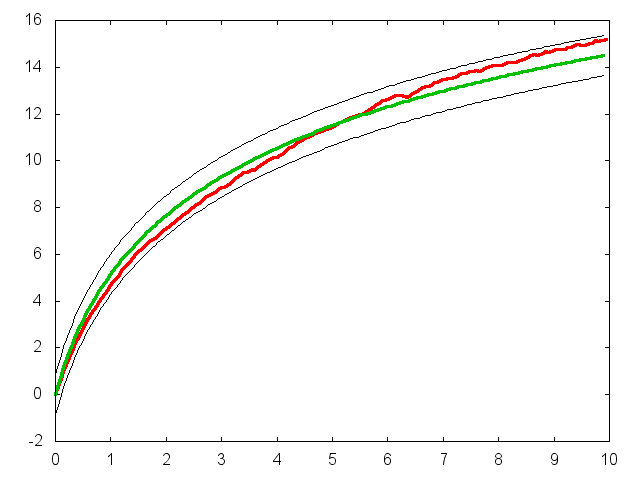}&\includegraphics[width=5cm]{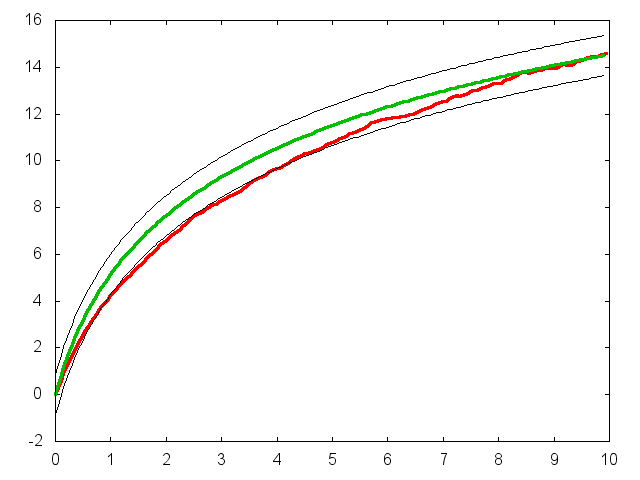}
\end{tabular}
\caption{Simulations of $(\Gamma_t,~t\in[0,10])$ in green and its approximation $\Gamma^{(40)}$ in red computed with: the P0 (on the left) and P1 (on the right) approximations of the noise. For the simulation we choose the coefficient of correlation $\xi=2$ with the kernel $q_\xi$ defined by (\ref{kernel_xi}). The intensity of the noise is $\sigma=0.15$. The time step is $0.05$ whereas the space step is about $0.64$. The two black curves are respectively $\Gamma$ plus, respectively minus, the error introduced by the scheme which is expected to be of order $\sqrt{\D t}+h$ equals here to $\sqrt{0.05}+0.64$.}\label{Fig_var}
\end{center}
\end{figure}

\subsection{Space-time discretization of the Fitzhugh-Nagumo model}\label{sect_FHN}

We write the scheme for the Fitzhugh-Nagumo which is a widely used model of excitable cells, see \cite{Fi69,LiGaNeSc04}. The stochastic Fitzhugh-Nagumo model, abbreviated by FHN model in the sequel, consists in the following $2$-dimensional system
\begin{equation}\label{FHN_BGT}
\left\{
\begin{array}{ccl}
{\rm d}u&=&[\kappa\D u+\frac1\e\left(u(1-u)(u-a)-v\right)]{\rm d}t+\sigma \dd W^Q,\\
\dd v&=&[u-v]\dd t,
\end{array}
\right.
\end{equation}
on $[0,T]\times D$. In the above system, $\kappa>0$ is a \emph{diffusion} coefficient, $\e>0$ a \emph{time-scale} coefficient, $\sigma>0$ the intensity of the noise and $a\in(0,1)$ a parameter. $W^Q$ is a $Q$-Wiener process satisfying Assumption \ref{cond_C}. System (\ref{FHN_BGT}) must be endowed with initial and boundary conditions. We denote by $u_0$ and $v_0$ the initial conditions for $u$ and $v$. Moreover we assume that $u$ satisfies zero Neumann boundary conditions:
\begin{equation}\label{bound_cond}
\forall t\in[0,T],\quad \frac{\partial u_t}{\partial \vec{n}}=0,\quad\text{ on }\partial D,
\end{equation}
where $\partial D$ denotes the boundary of $D$ and $\vec{n}$ is the external unit normal to this boundary. Noisy FHN model and especially, FHN with white noise, have been extensively studied. We refer the reader to \cite{BoMa08} where all the arguments needed to prove the following proposition are developed. 
\begin{proposition}
Let $W^Q$ be a colored noise with $Q$ satisfying Assumption \ref{cond_C}. We assume that $u_0$ and $v_0$ are in $L^2(D)$, $\PP$-almost surely. Then, for any time horizon $T$, the system (\ref{FHN_BGT}) has a unique solution $(u,v)$ defined on $[0,T]$ which is $\PP$-almost surely in $\mathcal{C}([0,T],H)\times\mathcal{C}([0,T],H)$. 
\end{proposition}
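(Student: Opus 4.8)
The plan is to recast the system in its mild (semigroup) formulation and to separate the genuinely stochastic part from a pathwise nonlinear problem. Let $A=\kappa\Delta$ with the zero Neumann boundary condition (\ref{bound_cond}); then $-A$ is a nonnegative self-adjoint operator on $H$ (with a one-dimensional kernel spanned by the constants, which causes no difficulty) and $A$ generates an analytic $C_0$ semigroup $(e^{At})_{t\ge0}$. Writing $F(u)=u(1-u)(u-a)$ for the cubic reaction term, a mild solution of (\ref{FHN_BGT}) is a pair $(u,v)$ satisfying
\begin{equation*}
u_t=e^{At}u_0+\frac1\e\int_0^te^{A(t-s)}\bigl(F(u_s)-v_s\bigr)\dd s+\sigma\int_0^te^{A(t-s)}\dd W^Q_s,\qquad v_t=e^{-t}v_0+\int_0^te^{-(t-s)}u_s\dd s .
\end{equation*}
The last term of the first identity is the stochastic convolution $z_t=\sigma\int_0^te^{A(t-s)}\dd W^Q_s$, i.e. the mild solution of the linear equation (\ref{eq_l}) with $\zeta=0$; by Proposition \ref{mild_sol} (whose hypotheses hold thanks to Assumption \ref{cond_C}) $z$ has, $\PP$-almost surely, continuous paths in $H$ and even takes values in $V=H^1(D)$. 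Setting $w=u-z$ removes the noise and leaves, for $\PP$-almost every fixed $\omega$, a deterministic problem with random but continuous forcing $z(\omega)$ for the pair $(w,v)$.

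For this pathwise problem I would first establish local existence and uniqueness by a contraction argument. The crucial structural point is that $F$ is a polynomial of degree three, hence only \emph{locally} Lipschitz; in space dimension $d\le3$ the Sobolev embedding $H^1(D)\hookrightarrow L^6(D)$ makes $u\mapsto F(u)$ a locally Lipschitz map from $V$ into $H$, and the analytic smoothing of $e^{At}$ (with $\|e^{At}\|_{H\to V}$ integrable near $0$) lets one run a Banach fixed point in $\mathcal{C}([0,T^\ast],V)$ on a short interval $[0,T^\ast]$. The coupling with $v$ is harmless: $v$ solves a linear $H$-valued ODE driven by $u$ and is recovered by the explicit Duhamel formula above, so it inherits the regularity of $u$ and feeds back into the $w$-equation through a globally Lipschitz (indeed linear) term.

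The main obstacle is passing from local to global existence, that is, ruling out finite-time blow-up despite the superlinear nonlinearity; this is exactly where the sign of the cubic must be exploited. Testing the $u$-equation against $u$ and the $v$-equation against $v$, the dominant reaction contribution is $\int_D F(u)\,u\,\dd x=\int_D\bigl(-u^4+(1+a)u^3-au^2\bigr)\dd x$, which a Young inequality bounds by $-\tfrac12\|u\|_{L^4(D)}^4+C|D|$, so the quartic dissipative term absorbs the lower-order growth. The cross terms $\tfrac1\e(v,u)$, $(u,v)$ and the contributions coming from the shift $z$ (such as $\int_D F(w+z)\,w$) are controlled by Young's inequality together with the almost-sure boundedness of $t\mapsto z_t(\omega)$ on $[0,T]$ guaranteed by Proposition \ref{mild_sol}. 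Combining these estimates and applying Gronwall's lemma yields a pathwise a priori bound on $\sup_{t\in[0,T]}\bigl(\|u_t\|^2+\|v_t\|^2\bigr)$, which prevents explosion and lets the local solution be continued up to the fixed horizon $T$. Uniqueness follows from the same local Lipschitz estimates once the a priori bound confines the solution to a fixed ball, and continuity of the paths in $\mathcal{C}([0,T],H)\times\mathcal{C}([0,T],H)$ is read off the Duhamel formulas, the continuity of $z$ and the continuity in $H$ of the deterministic convolution terms. This is precisely the programme carried out in \cite{BoMa08}.
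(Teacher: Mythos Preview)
Your sketch is correct and, like the paper, ultimately defers to \cite{BoMa08}; but the structural ingredient you put forward is not the one the paper singles out. The paper's argument is compressed into two lines: it invokes the It\^o formula (applied directly to $\|u\|^2$ in the stochastic setting, not after a pathwise shift) together with the \emph{one-sided Lipschitz} inequality
\[
(f(u)-f(v),\,u-v)\le\tfrac{1+a^2-a}{3}\,\|u-v\|^2,\qquad f(x)=x(1-x)(x-a),
\]
i.e.\ the dissipativity of $f-\tfrac{1+a^2-a}{3}\,{\rm Id}$. That inequality yields uniqueness and the a priori bound in one stroke via the monotone/dissipative operator framework, without localising to a ball or calling on the embedding $H^1\hookrightarrow L^6$.

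Your route --- subtract the stochastic convolution $z$, run a contraction in $V$ using the local Lipschitz property of the cubic, then close a pathwise energy estimate through the quartic term $-\|u\|_{L^4}^4$ --- is the other standard strategy and is equally valid. Its advantage is that after the shift the analysis is purely deterministic, so no stochastic calculus beyond Proposition~\ref{mild_sol} is needed; the price is that you must manage the initial layer (the data lie only in $H$, so $e^{At}u_0\notin\mathcal{C}([0,T^\ast],V)$ at $t=0$, and the fixed point should be set up in a weighted or $H$-valued space) and the fact that the Neumann Laplacian is only nonnegative rather than positive as in Assumption~\ref{ass_A}. You flag both points as routine, which they are. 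The paper's dissipativity-plus-It\^o route sidesteps these technicalities at the cost of using the infinite-dimensional It\^o formula.
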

\noindent The proof of this proposition relies on Itô Formula, see Chapter 1, Section 4.5 of \cite{DaZa92}, and the fact that the functional defined by
\[
f(x)=x(1-x)(x-a),\quad\forall x\in \R
\]
satisfies the inequality
\[
(f(u)-f(v),u-v)\leq\frac{1+a^2-a}{3}\|u-v\|^2,\quad \forall (u,v)\in H\times H,
\]
which implies that the map $f-\frac{1+a^2-a}{3}{\rm Id}$ is dissipative. The local kinetics of system (\ref{FHN_BGT}), that is the dynamics in the absence of spatial derivative, is illustrated in Figure \ref{Fig_FHN_phase}. It describes the dynamics of the system of ODEs
\begin{equation}\label{FHN_ode}
\left\{
\begin{array}{ccl}
{\rm d}\mathfrak{u}&=&[\frac1\e \mathfrak{u}(1-\mathfrak{u})(\mathfrak{u}-a)-\mathfrak{v}]{\rm d}t,\\
\dd \mathfrak{v}&=&[\mathfrak{u}-\mathfrak{v}]\dd t,
\end{array}
\right.
\end{equation}
when the initial condition $(\mathfrak{u}_0,\mathfrak{v}_0)$ is in $[0,1]\times[0,1]$.
\begin{figure}
\begin{center}
\begin{tikzpicture}
\draw plot[domain=-0.25:1,scale=3] (\x,{10*\x*(1-\x)*(\x-0.1)});
\draw plot[domain=-0.25:1,scale=3] (\x,{\x});
\draw[->] (0,0) --node[very near end,below] {$\mathfrak{u(t)}$} (4,0);
\draw[->] (0,0) --node[very near end,left] {$\mathfrak{v(t)}$} (0,4);
\draw[red,->] (0,0).. controls +(0.5,0) and +(0,-0.5)..(2.7,0.8).. controls +(0,0.5) and +(0.5,0)..(2.3,2.4).. controls +(-0.5,0) and +(0.5,0.1)..(-0.2,2.1).. controls +(-0.5,-0.1) and +(-0.1,0.5)..(0,0);
\draw[blue] (0,0) node {$\bullet$} ;
\draw[blue] (0.675,0.675) node {$\bullet$} ;
\draw[blue] (2.6,2.6) node {$\bullet$} ;
\end{tikzpicture}
\caption{Phase portrait with nullclines of system (\ref{FHN_ode}) for $a=0.1$ and $\e=0.1$. The blue points correspond to the three equilibrium points of the system.}\label{Fig_FHN_phase}
\end{center}
\end{figure}

\noindent We explicitly give the numerical scheme used to simulate system (\ref{FHN_BGT}). Let us define the function $k$ given by
\[
k(x)=\frac1\e\left(-x^3+x^2(1+a)\right), \forall x\in \R.
\]
This function corresponds to the non linear parts of the reaction term $f$. We use the following semi-implicit Euler-Maruyama scheme
\begin{equation}\label{scheme_FHN}
\left\{
\begin{array}{ccl}
\frac{u_{n+1}-u_n}{\D t}&=&\kappa\D u_{n+1}-\frac a\e u_{n+1}+k(u_n)-v_{n+1}+\frac{\sigma}{\sqrt\D t}W^Q_{1,n+1},\\
\frac{v_{n+1}-v_n}{\D t}&=&u_{n+1}-v_{n+1},
\end{array}
\right.
\end{equation}
where $(W^Q_{1,n})_{1\leq n\leq N+1}$ is a sequence of independent $Q$-Wiener processes evaluated at time $1$. Let $(H^*,\mathcal{B}(H^*),\tilde\PP)$ be chosen so that the canonical process has the same law as $W^Q_{1,n+1}$ under $\tilde\PP$. Then, for a given $(u_{n},v_n)\in H^1(D)\times H$, the equation
\[
(\frac{1}{\D t}+\frac a\e+\frac{\D t}{1+\D t})u_{n+1}-\kappa\D u_{n+1}=k(u_n)-\frac{1}{1+\D t}v_n+\frac{\sigma}{\sqrt\D t}W^Q_{1,n+1}
\]
has a unique weak solution $u_{n+1}$ in $H^1(D)$, $\tilde\PP$-almost surely. This fact follows from Lax-Milgram Theorem and a measurable selection theorem, see Section 5 of the survey \cite{Wa80}. Therefore, without loss of generality, we may assume in this section that the probability space is $(\mathcal{C}([0,T],H^*),\mathcal{B}(\mathcal{C}([0,T],H^*)),\PP)$ such that under $\PP$, the canonical process has the same law as $W^Q$.
\begin{remark}
In the scheme (\ref{scheme_FHN}), we could have chosen other ways to approximate the reaction term. For instance, it is also possible to work with a completely implicit scheme with $k(u_{n+1})$ instead of $k(u_n)$ in (\ref{scheme_FHN}).
\end{remark}
Let us consider the weak form for the first equation of (\ref{scheme_FHN}). We get,
\begin{equation}\label{scheme_FHN_weak}
\left\{
\begin{array}{ccl}
(\frac{1}{\D t}+\frac a\e+\frac{\D t}{1+\D t})(u_{n+1},\psi)+\kappa(\nabla u_{n+1},\nabla \psi)&=&(k(u_n),\psi)-\frac{1}{1+\D t}(v_n,\psi)+\frac{\sigma}{\sqrt{\D t}}(W^Q_{1,n+1},\psi),\\
v_{n+1}-\frac{\D t}{1+\D t}u_{n+1}&=&\frac{1}{1+\D t}v_{n}
\end{array}
\right.
\end{equation}
for all $\psi\in H^1(D)$. Let $h>0$ and $(\psi_i,1\leq i\leq N_h)$ be the P1 finite element basis defined in Section \ref{sect_noise}. For $n\geq0$, we define the vectors
\[
\vec{u}_n=(u_{n,i})_{1\leq i\leq N_h},\quad \vec{v}_n=(v_{n,i})_{1\leq i\leq N_h},\quad \vec{W}^Q_{n+1}=(W^Q_{1,n+1}(P_i))_{1\leq i\leq N_h},
\]
which are respectively the coordinates of $u_n$, $v_n$ and $W^Q_{1,n+1}$ w.r.t. the basis $(\psi_i,1\leq i\leq N_h)$. We also define the stiffness matrix $A\in\mathcal{M}_{N_h}(\R)$ and the mass matrix $M\in\mathcal{M}_{N_h}(\R)$ by
\[
A_{ij}=(\nabla\psi_i,\nabla\psi_j),\quad M_{ij}=(\psi_i,\psi_j).
\]
System (\ref{scheme_FHN_weak}) can be rewritten as
\begin{eqnarray*}\label{scheme_FHN_final}
\left(
\begin{array}{cc}
(\frac{1}{\D t}+\frac a\e+\frac{\D t}{1+\D t})M+\kappa A&0\\
-\frac{\D t}{1+\D t}I&I
\end{array}
\right)
\left(
\begin{array}{cc}
\vec{u}_{n+1}\\
\vec{v}_{n+1}
\end{array}
\right)
&=&
\left(
\begin{array}{cc}
0&-\frac{1}{1+\D t}M\\
0&\frac{1}{1+\D t}I
\end{array}
\right)
\left(
\begin{array}{cc}
\vec{u}_{n}\\
\vec{v}_{n}
\end{array}
\right)
+\left(
\begin{array}{cc}
G(\vec{u}_{n})\\
0
\end{array}
\right)
\\&+&
\left(
\begin{array}{cc}
\frac{\sigma}{\sqrt\D t}M&0\\
0&0
\end{array}
\right)
\left(
\begin{array}{cc}
\vec{W}^Q_{1,n+1}\\
0\end{array}
\right),
\end{eqnarray*}
where $G(\vec{u}_n)=(k(u_n),\psi_i)_{1\leq i\leq N_h}\in\R^{N_h}$. As for the parabolic stochastic equation considered in Section \ref{sect_heat}, one may expect a numerical strong error for this scheme of order
\begin{equation}\label{err_eq_nl}
\EE(\|(u_t,v_t)-(u_{t,n},v_{t,n})\|^2)^{\frac12}=\text{O}(h+\sqrt{\D t}),
\end{equation}
for $\D t\leq \D t_0$. In (\ref{err_eq_nl})$, (u_{t,n},v_{t,n})_{t\in[0,T]}$ is the interpolation of the discretized point which is piecewise linear in time.\\
\noindent We end up this section with Figure \ref{Fig_FHN} which displays simulations of the stochastic Fitzhugh-Nagumo model (\ref{FHN_BGT}) with zero Neumann boundary conditions on a cardioid domain and zero initial conditions. The kernel of the operator $Q$ is given by equation (\ref{kernel_xi}) for some $\xi>0$. Due to a strong intensity of the noise source ($\sigma=1$), we observe the spontaneous nucleation of a front wave with irregular front propagating throughout the whole domain. 
\begin{figure}
\begin{center}
\begin{tabular}{ccc}
a)\includegraphics[width=3cm]{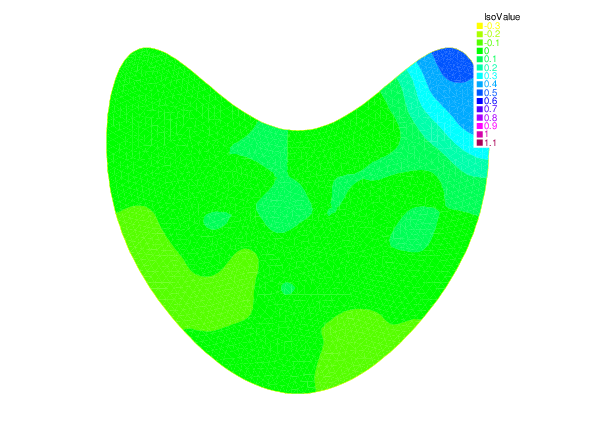}&b)\includegraphics[width=3cm]{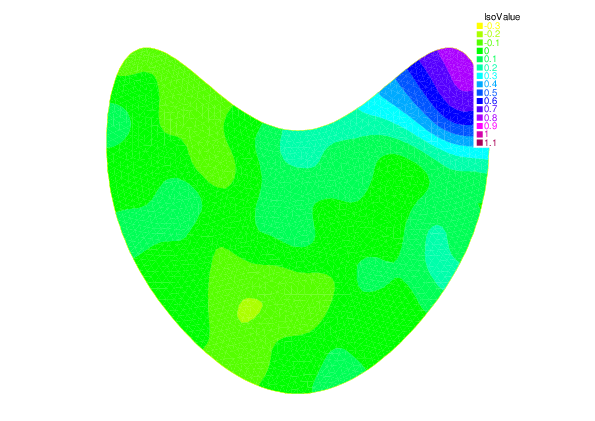}&c)\includegraphics[width=3cm]{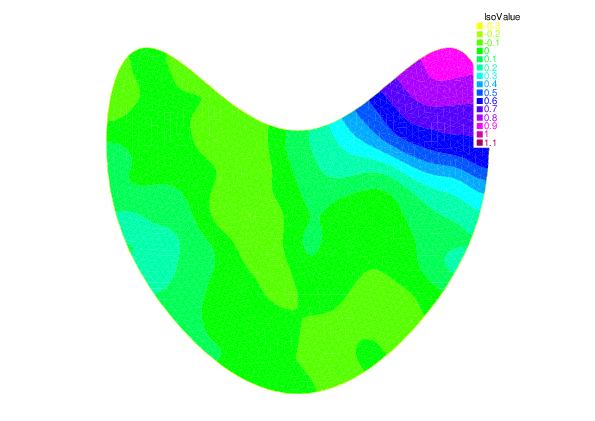}\\ d)\includegraphics[width=3cm]{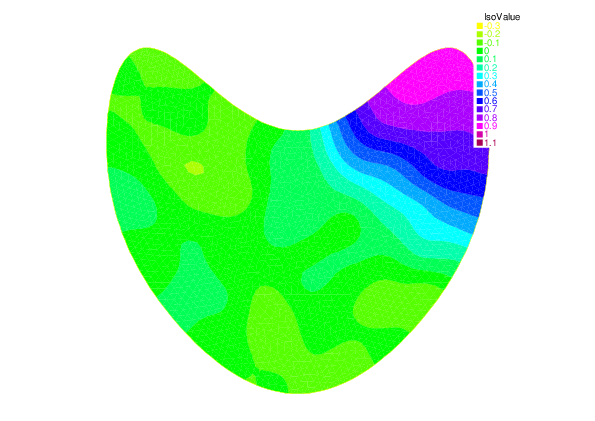}&e)\includegraphics[width=3cm]{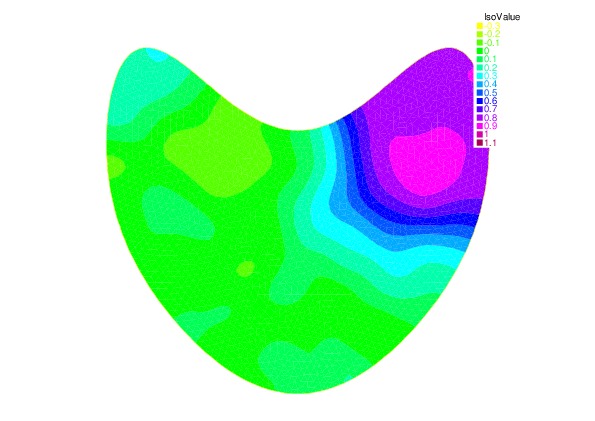}&f)\includegraphics[width=3cm]{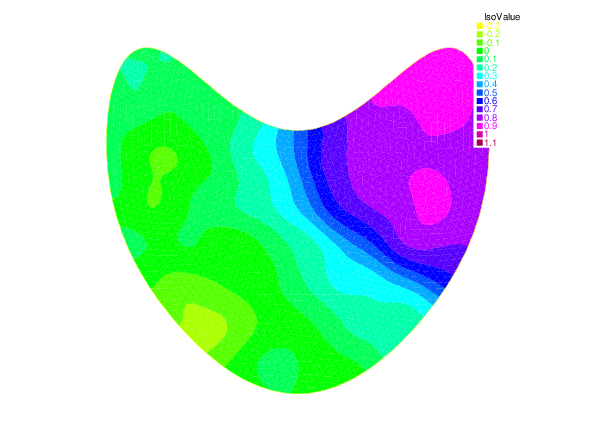}
\end{tabular}
\caption{Simulations of system (\ref{FHN_BGT}) with $\xi=2$, $\sigma=1$, $\varepsilon=0.1$, $a=0.1$. These figures must be read from the up-left to the down-right. The time step is $0.05ms$ and there is $0.5 ms$ between each figure.}\label{Fig_FHN}
\end{center}
\end{figure}

%%%%%%%%%%%%%%%%%%%%%%%%%%%%%%%%%%%%%%%%%%%%%%%%%%%%%%%%
%%%%%%%%%%%%%%%%%%%%%%%%%%%%%%%%%%%%%%%%%%%%%%%%%%%%%%%%
%%%%%%%%%%%%%%%%%%%%%%%%%%%%%%%%%%%%%%%%%%%%%%%%%%%%%%%%
%%%%%%%%%%%%%%%%%%%%%%%%%%%%%%%%%%%%%%%%%%%%%%%%%%%%%%%%
\section{Arrhythmia and reentrant patterns in excitable media}\label{sect_arrhy}
%%%%%%%%%%%%%%%%%%%%%%%%%%%%%%%%%%%%%%%%%%%%%%%%%%%%%%%%
%%%%%%%%%%%%%%%%%%%%%%%%%%%%%%%%%%%%%%%%%%%%%%%%%%%%%%%%
%%%%%%%%%%%%%%%%%%%%%%%%%%%%%%%%%%%%%%%%%%%%%%%%%%%%%%%%
%%%%%%%%%%%%%%%%%%%%%%%%%%%%%%%%%%%%%%%%%%%%%%%%%%%%%%%%

In this section, we focus on classical models for excitable cells, namely Barkley and Mitchell-Schaeffer models. We would like to observe cardiac arrhythmia, that is troubles that may appear in the cardiac beats. Among the diversity of arrhythmia, the phenomena of tachycardia are certainly the most dangerous as they lead to rapid loss of consciousness and death. Tachycardia is described as follows in \cite{JC06}. 
\begin{quote}The vast majority of tachyarrhythmias are perpetuated by reentrant mechanisms. Reentry occurs when previously activated tissue is repeatedly activated by the propagating action potential wave as it reenters the same anatomical region and reactivates it.
\end{quote}
In system (\ref{eq_model}), the equation on $u$ gives the evolution of the cardiac action potential. The equation on $v$ takes into account the evolution of internal biological mechanisms leading to the generation of this action potential. We will be more specifically interested by two systems of this form: the Barkley and Mitchell-Schaeffer models.

\subsection{Numerical study of the Barkley model}

\subsubsection{The model}\label{am}

In the deterministic setting, a paradigm for excitable systems where reentrant phenomena such as spiral, meander or scroll waves have been observed and studied is the Barkley model, see \cite{B90,B91,B92,B94}. This deterministic model is of the following form
\begin{equation}\label{Bar_det}
\left\{
\begin{array}{ccl}
{\rm d}u&=&[\kappa\D u+\frac1\e u(1-u)(u-\frac{v+b}{a})]{\rm d}t,\\
\dd v&=&[u-v]\dd t.
\end{array}
\right.
\end{equation}
The parameter $\e$ is typically small so that the time scale of $u$ is much faster than that of $v$. 
%If we write $u_{{\rm th}}(v)$ for $\frac{v+b}{a}$ then $u_{{\rm th}}$ represents the excitability threshold for the system. Because of the smallness of $\varepsilon$, the $u$ field takes on one of the two states almost everywhere: quiescent ($u=0$) and excited ($u=1$). Indeed, at least formally, if we multiply the first equation on $u$ in system (\ref{Bar_det}) by $\e$ we obtain, at the first order in $\e$,
%\[
% u(1-u)(u-\frac{v+b}{a})=0.
%\] 
%Now if $u$ is equal to $\frac{v+b}{a}$ the second equation in system (\ref{Bar_det}) yields
%\[
%\frac{\dd v}{\dd t}=\left(\frac1a-1\right)v+\frac ba.
%\]
%If $a\in(0,1)$, which will always be the case here, this equation is not admissible for consistency reasons: if $v_0$ and $u_0$ are in $[0,1]$, system (\ref{Bar_det}) implies that $u_t$ and $v_t$ are in $[0,1]$ for any positive time $t$. Therefore $u=0$ or $u=1$ at the first order in $\e$. A thin interface, or reaction zone, separates the quiescent and excited regions. 
For more details on the dynamic of waves in excitable media, we refer the reader to \cite{K80}. The Barkley model, like two-variables models of this type, faithfully captures the behavior of many excitable systems. The deterministic model (\ref{Bar_det}) does not exhibit re-entrant patterns unless one imposes special conditions on the domain: for instance, one may impose that a portion of the spatial domain is a "dead zone". This means a region with impermeable boundaries where equations (\ref{Bar_det}) do not apply: when a wave reaches this dead region, the tip of the wave may turn around and this induces a spiral behavior, see Section 2.2 of \cite{K80}. One may also impose specific initial conditions such that some zones are intentionally hyper-polarized: the dead region is somehow transient in this case.

\subsubsection{Reentrant patterns}

As in \cite{Sh05} we add a colored noise with kernel of type (\ref{kernel_xi}) to equation (\ref{Bar_det}) and so we consider
\begin{equation}\label{Bar}
\left\{
\begin{array}{ccl}
{\rm d}u&=&[\kappa\D u+\frac1\e u(1-u)(u-\frac{v+b}{a})]{\rm d}t+\sigma \dd W^{Q_\xi},\\
\dd v&=&[u-v]\dd t,
\end{array}
\right.
\end{equation}
where the kernel of $Q_\xi$ is given by (\ref{kernel_xi}) for $\xi>0$.\\
Figure \ref{Fig_Bar_carre} displays a simulation of system (\ref{Bar}) on the square $D=[0,l]\times [0,l]$ with periodic boundary conditions:
\begin{equation}\label{bcond}
\begin{array}{lll}
\forall t\in\R_+,&\forall x\in[0,l]\quad  u_t(x,0)=u_t(x,l),&\quad{\rm and}\quad \frac{\partial u_t}{\partial \vec n}(x,0)=\frac{\partial u_t}{\partial \vec n}(x,l),\\
&\forall y\in[0,l]\quad u_t(0,y)=u_t(l,y),&\quad{\rm and}\quad \frac{\partial u_t}{\partial \vec n}(0,y)=\frac{\partial u_t}{\partial \vec n}(l,y),
\end{array}
\end{equation}
where $\vec n$ is the external unit normal to the boundary. The numerical scheme is based on the following variational formulation. Given $u_0$ and $v_0$ in $H^1(D)$, find $(u_n,v_n)_{1\leq n\leq N}$ such that for all $0\leq n\leq N-1$,
\begin{equation}\label{scheme_Bar_weak}
\left\{
\begin{array}{ccl}
(\frac{u_{n+1}-u_n}{\D t},\psi)+\kappa(\nabla u_{n+1},\nabla \psi)&=&\frac{1}{\e}(u_n(1-u_n)(u_n-\frac{v_n+b}{a}),\psi)+\frac{\sigma}{\sqrt\D t}(W^Q_{1,n+1},\psi),\\
\frac{v_{n+1}-v_n}{\D t}&=&u_{n+1}-v_{n+1}
\end{array}
\right.
\end{equation}
with boundary conditions $u_n(x,0)=u_n(x,l)$, $u_n(0,x)=u_n(l,y)$ and for all $\psi\in H^1(D)$ satisfying $\psi(x,0)=\psi(x,l)$ and $\psi(0,y)=\psi(l,y)$ for any $(x,y)\in [0,l]\times[0,l]$. We have solved this problem using the P1 finite element methods, see Section \ref{section_fe}.\\
Our aim is to observe reentrant patterns generated by the presence of the noise source in this system.
\begin{figure}
\begin{center}
\begin{tabular}{cccc}
a)\includegraphics[width=2cm]{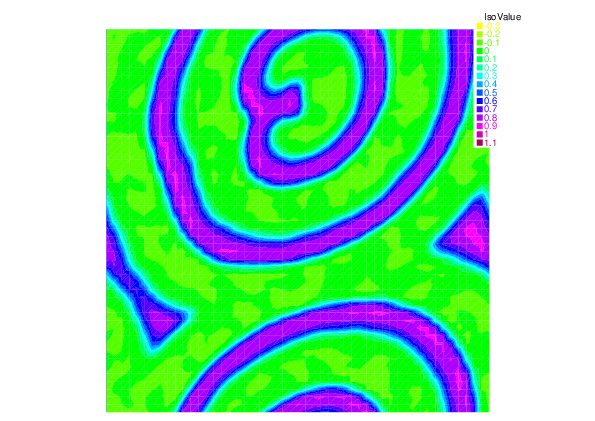}&b)\includegraphics[width=2cm]{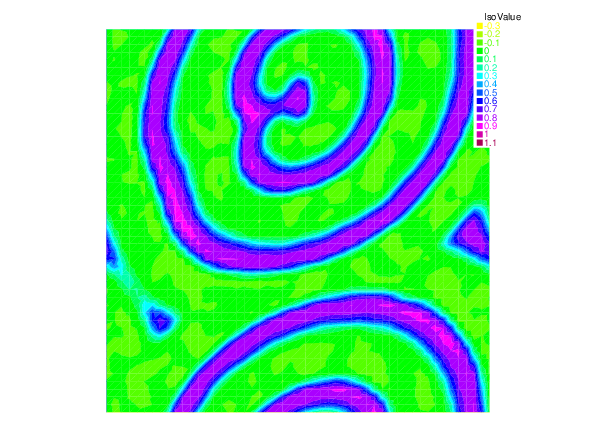}&c)\includegraphics[width=2cm]{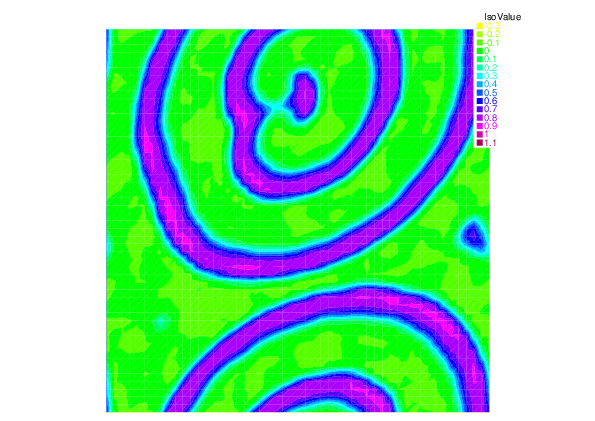}&d)\includegraphics[width=2cm]{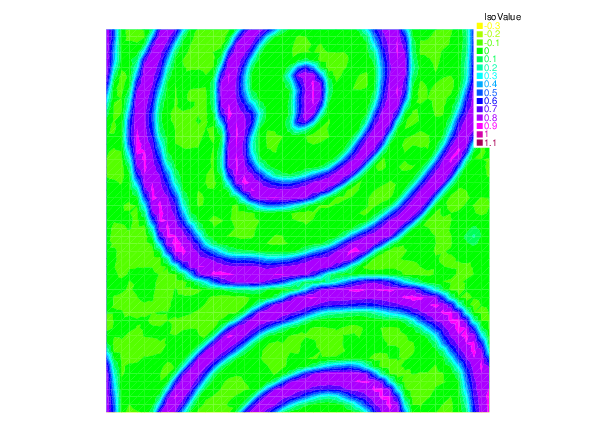}\\
e)\includegraphics[width=2cm]{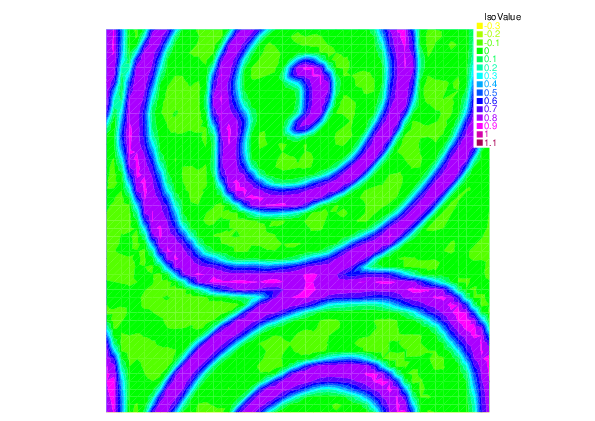}&f)\includegraphics[width=2cm]{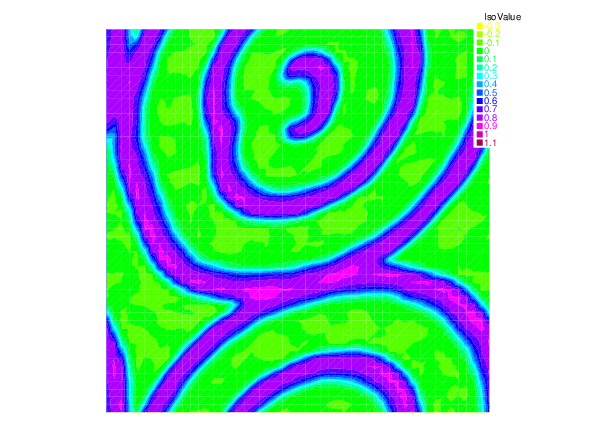}&g)\includegraphics[width=2cm]{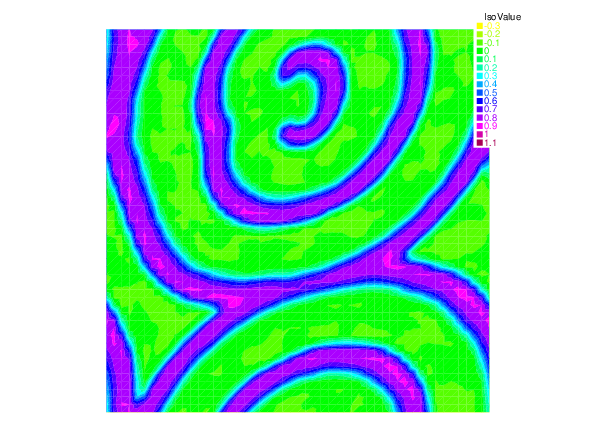}&h)\includegraphics[width=2cm]{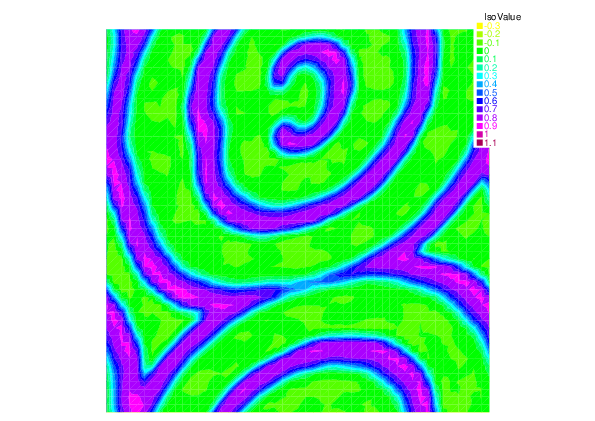}\\
i)\includegraphics[width=2cm]{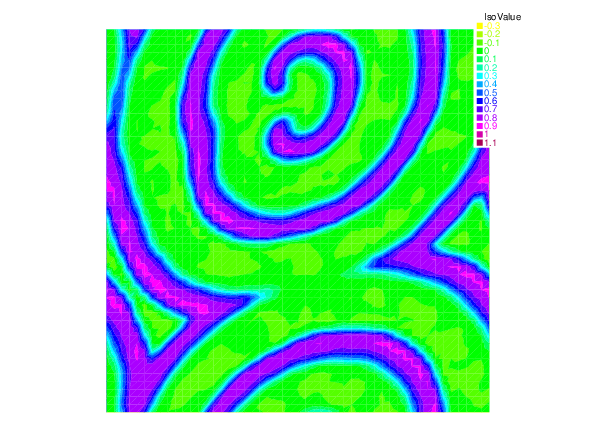}&j)\includegraphics[width=2cm]{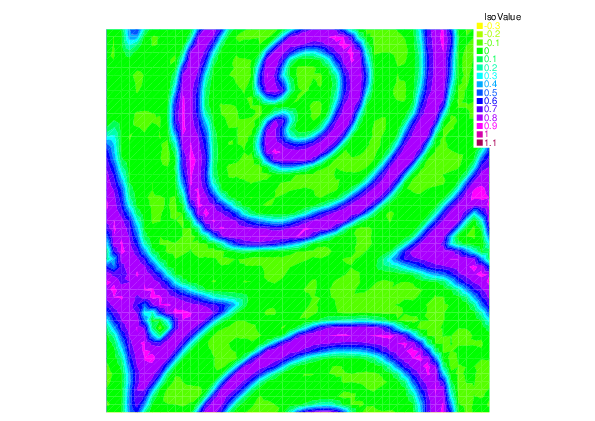}&k)\includegraphics[width=2cm]{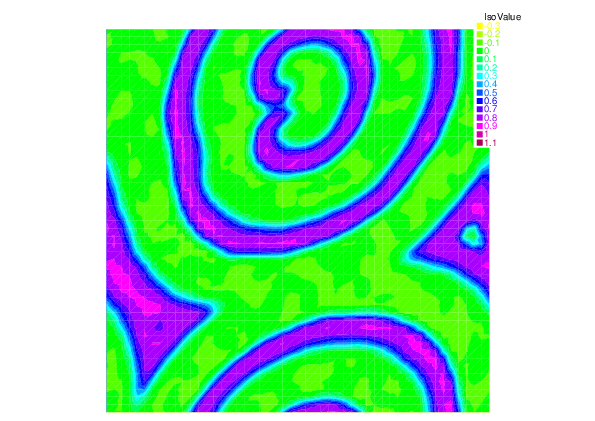}&l)\includegraphics[width=2cm]{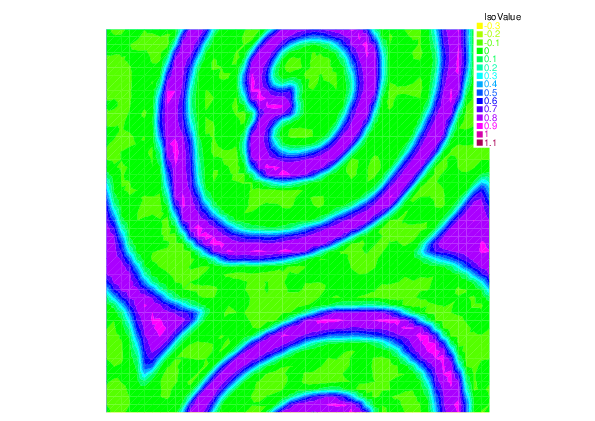}\\
m)\includegraphics[width=2cm]{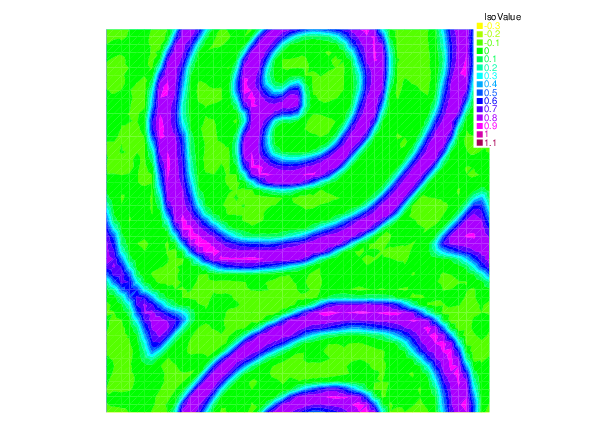}&n)\includegraphics[width=2cm]{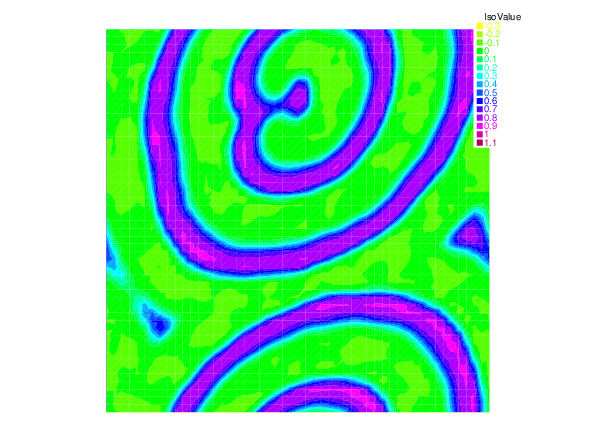}&o)\includegraphics[width=2cm]{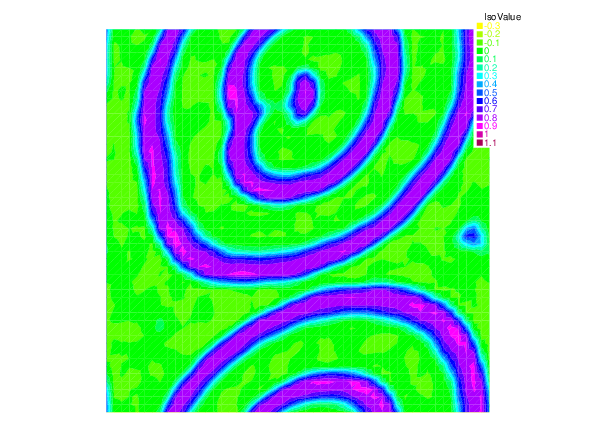}&p)\includegraphics[width=2cm]{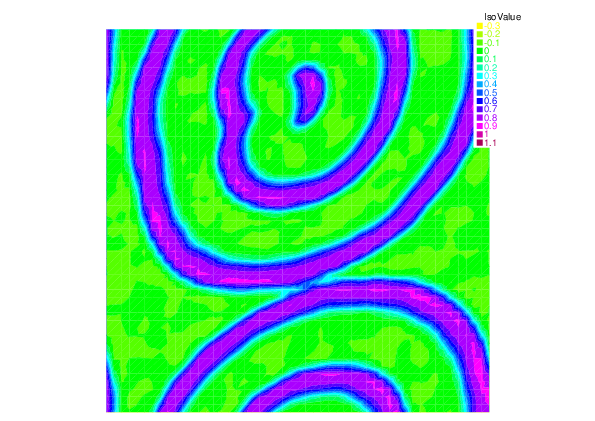}
\end{tabular}
\caption{Reentry is observed for system (\ref{Bar}) with $\xi=2$, $\sigma=0.15$, $\varepsilon=0.05$, $a=0.75$, $b=0.01$ and $\nu=1$. These figures must be read from the top-left to the bottom-right. The quiescent state is represented in green whereas the excited state is in violet. If time is recorded in $ms$, there is $0.5 ms$ between each figures for a time step of $0.05 ms$.}\label{Fig_Bar_carre}
\end{center}
\end{figure}
Figure \ref{Fig_Bar_carre} displays simulations of (\ref{Bar}) using the P1 finite element method. We observe the spontaneous generation of waves with a reentrant pattern. At some points in the spatial domain, the system is excited and exhibits a reentrant evolution which is self-sustained: a previously activated zone is re-activated by the same wave periodically. As explained in \cite{JC06} and quoted in Section \ref{am}, this phenomenon can be interpreted biologically as tachycardia in the heart tissue. We observe that, as in \cite{Sh05}, the constants $a$ and $b$ are chosen such that the deterministic version of system (\ref{Bar}) may exhibit spiral pattern, see the bifurcation diagram between $a$ and $b$ in \cite{B94}. However, in our context, the generation of spiral is a phenomenon which is due solely to the presence of noise. In particular, there is no need for a "dead region", as previously mentioned for the observation of spirals or reentrant patterns in a deterministic context. 
\noindent Figure \ref{Fig_Bar_coeur} displays a simulation of system (\ref{Bar}) on a cardioid domain  with zero Neumann boundary conditions, see (\ref{bound_cond}). We observe the spontaneous generation of a wave turning around itself like a spiral and thus reactivating zones already activated by the same wave.
\begin{figure}
\begin{center}
\begin{tabular}{cccc}
a)\includegraphics[width=2cm]{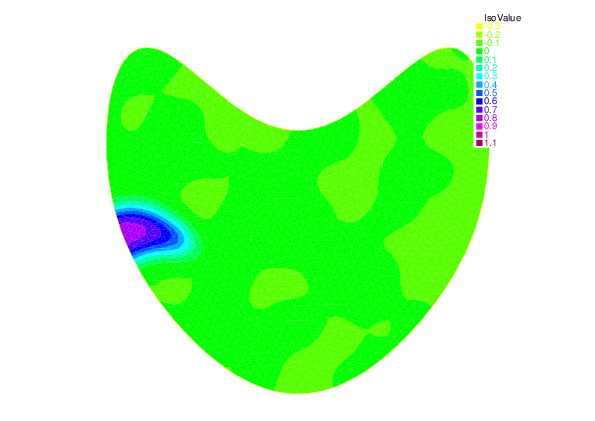}&b)\includegraphics[width=2cm]{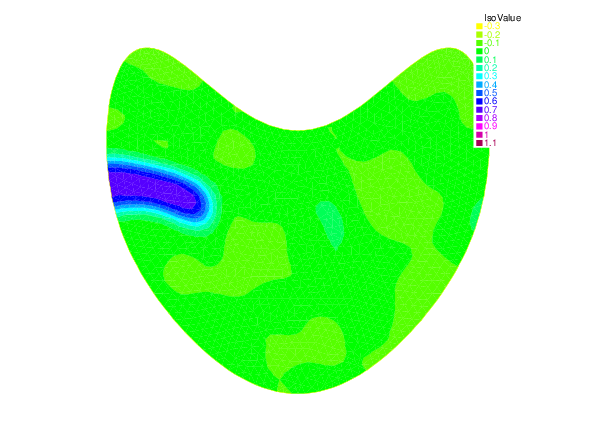}&c)\includegraphics[width=2cm]{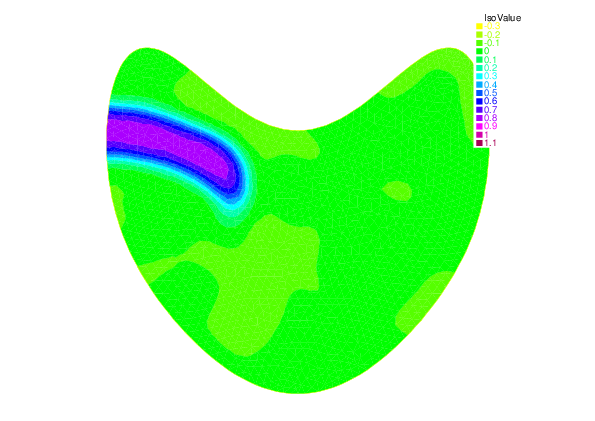}&d)\includegraphics[width=2cm]{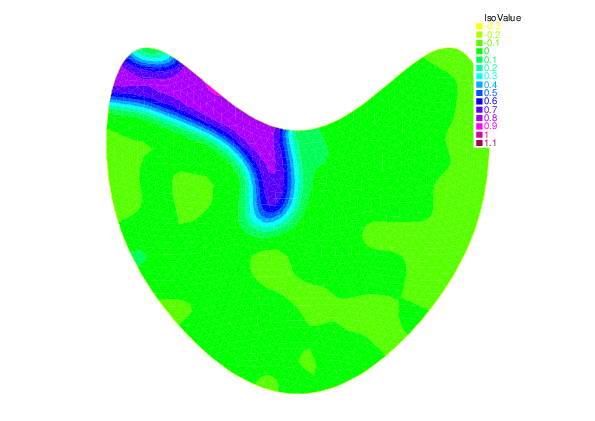}\\
e)\includegraphics[width=2cm]{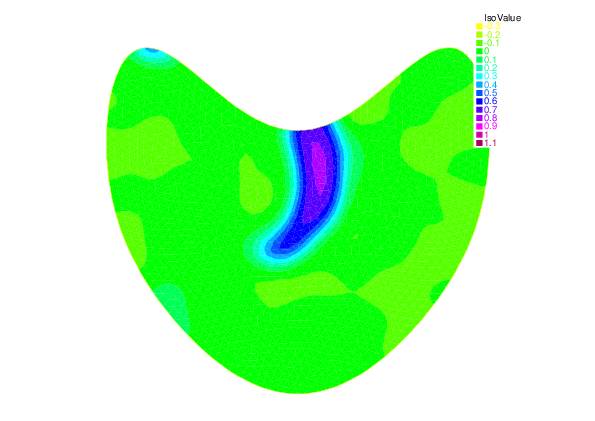}&f)\includegraphics[width=2cm]{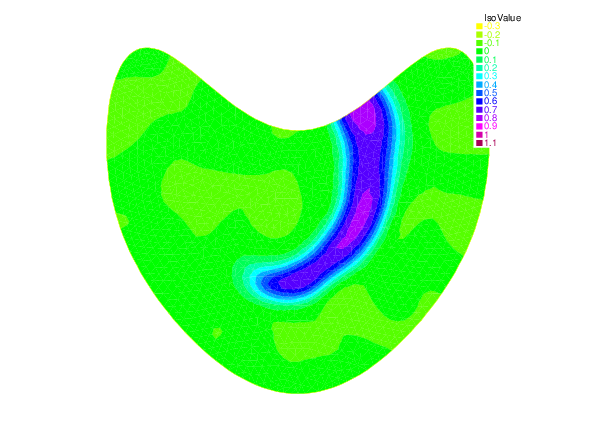}&g)\includegraphics[width=2cm]{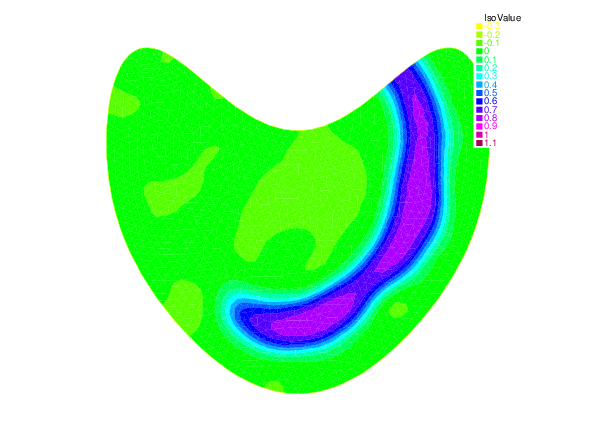}&h)\includegraphics[width=2cm]{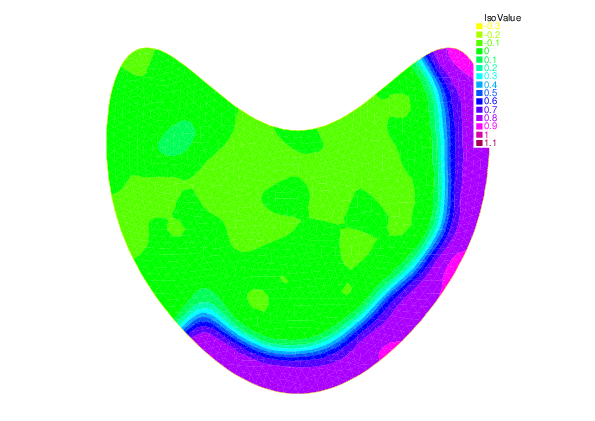}\\
i)\includegraphics[width=2cm]{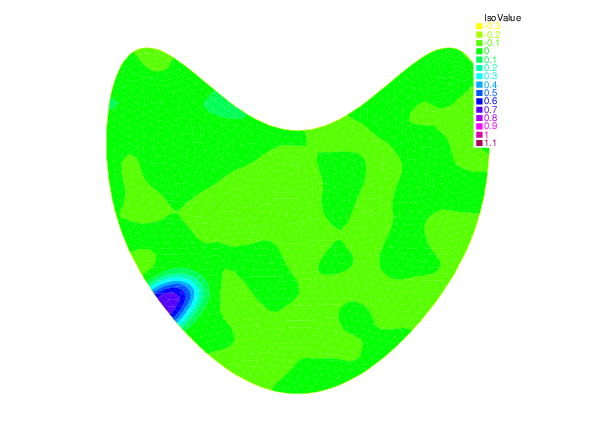}&j)\includegraphics[width=2cm]{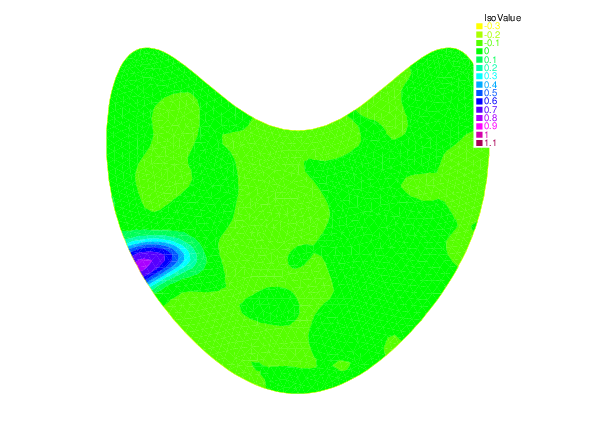}&k)\includegraphics[width=2cm]{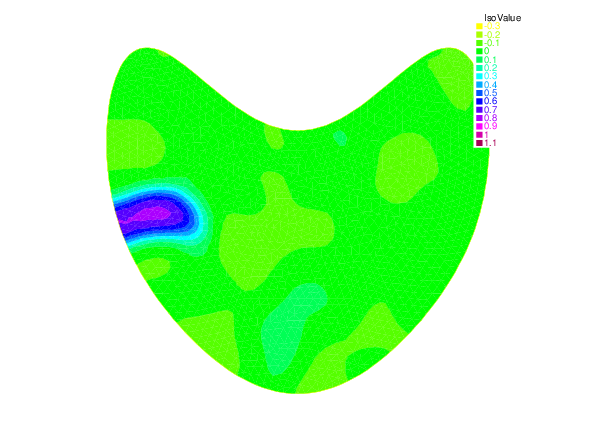}&l)\includegraphics[width=2cm]{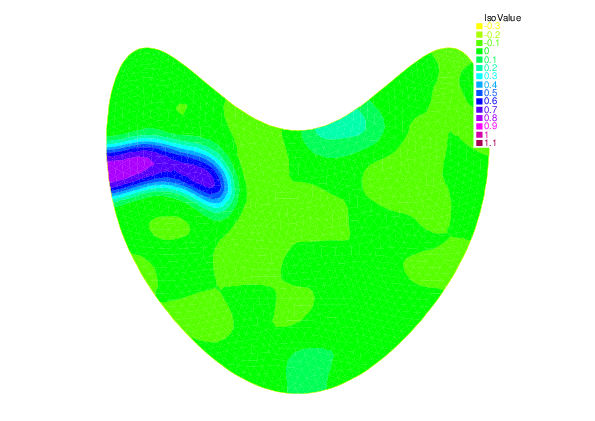}
\end{tabular}
\caption{Simulations of system (\ref{Bar}) with $\xi=2$, $\sigma=0.15$, $\varepsilon=0.05$, $a=0.75$, $b=0.01$ and $\nu=1$. As for the previous figure, the quiescent state is represented in green whereas the excited state is in violet. Another phenomena of re-entry is observed on this cardioid geometry with zero Neumann boundary conditions. There is $2 ms$ between each snapshot for a time step for the simulations equals to $0.05 ms$.}\label{Fig_Bar_coeur}
\end{center}
\end{figure}

To gain a better insight into these reentrant phenomena, a bifurcation diagram between $\e$ and $\sigma$ in system (\ref{Bar}) is displayed in Figure \ref{Fig_Bar_carre_bif}. In this figure, the other parameters $a,b,\nu,\xi$ are held fixed. The domain and boundary conditions are the same as for Figure \ref{Fig_Bar_carre}. Three distinct areas emerge from repeated simulations:
\begin{itemize}
\item the area NW (for No Wave) where no wave is  observed.
\item the area W (for Wave) where at least one wave is generated on average. Such waves do not exhibit reentrant patterns.
\item the area RW (for Reentrant Wave) where waves with re-entry are observed. The wave has the same pattern as in Figure \ref{Fig_Bar_carre}.
\end{itemize} 
Let us mention that these three different area emerge from repeated simulations. The detection of re-entrant patterns is quite empirical here: we say that there is re-entrant patterns if we can actually see it on the figures. An automatic detection of re-entrant patterns may certainly be derived from \cite{LoTh12} even if rather difficult to implement in our setting.\\
At transition between the areas W and RW, ring waves with the same pattern as reentrant waves may be observed: two arms which join each other to form a ring. We also remark that for a fixed $\e$, when $\sigma$ increases, the number of nucleated waves increases. On the contrary, for a fixed $\sigma$ when $\e$ increases the number of nucleated waves decreases.
\begin{figure}
\begin{center}
\begin{tikzpicture}
\begin{scope}[scale=0.3]
\draw[very thin,gray] (0,0) grid (8.5,15.5);
\draw[blue, fill,opacity=0.5] (0.5,15).. controls +(0,-0.5) and +(0,0.5)..(0.5,0.5).. controls +(0.5,-0.5) and +(-0.5,-0.5)..(2,3.5).. controls +(0.5,0.5) and +(-0.5,-0.5)..(3,4.75).. controls +(0.5,0.5) and +(-0.5,-0.5)..(4,6.1).. controls +(0.5,0.5) and +(-0.5,-0.5)..(4.75,6.7).. controls +(0.5,0.5) and +(-0.2,-0.5)..(6,9).. controls +(0.2,0.5) and +(-0.2,-0.5)..(6.5,10).. controls +(0.2,0.5) and +(0,-0.5)..(7,15);
\draw[green, fill,opacity=0.5] (3.75,15).. controls +(0,-0.5) and +(0.1,0.5)..(3.75,12).. controls +(-0.1,-0.5) and +(0.1,0.5)..(3.25,7.5).. controls +(-0.1,-0.5) and +(-0.5,0)..(3.75,6.5).. controls +(0.5,-0.1) and +(-0.5,0)..(4.25,6.5).. controls +(0.5,0.1) and +(-0.2,-0.5)..(6,9).. controls +(0.2,0.5) and +(-0.2,-0.5)..(6.5,10).. controls +(0.2,0.5) and +(0,-0.5)..(7,15);
\draw (1,0) node[below]{$0.01$};
\draw (7,0) node[below]{$0.07$};
\draw (0,1) node[left]{$0.05$};
\draw (0,14) node[left]{$0.18$};
\draw[thick,->] (0,0) -- (0,15.5) node[left]{$\sigma$};
\draw[thick,->] (0,0) -- (8.5,0) node[below]{$\e$};
\draw (5,3) node[fill=white]{NW};
\draw (2,10) node[fill=blue!50]{W};
\draw (5,11) node[fill=blue!32!green]{RW};
\draw (12.5,8.5) node{\includegraphics[width=2cm]{simu_580.png}};
\draw[->,thick] (10,8.5) -- (4.5,8.5);
\draw (-5.5,2) node{\includegraphics[width=2cm]{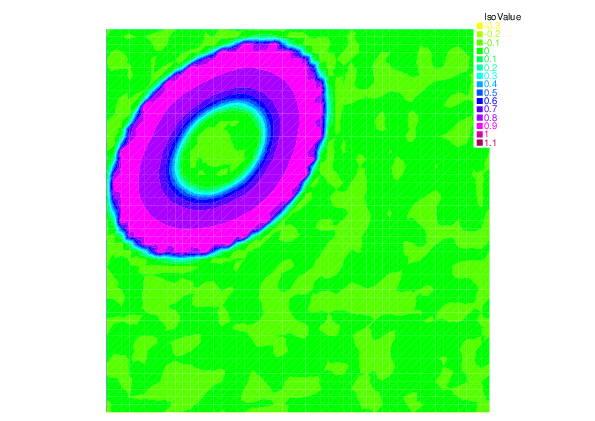}};
\draw[->,thick] (-3,2) -- (1,2);
\draw (12.5,1) node{\includegraphics[width=2cm]{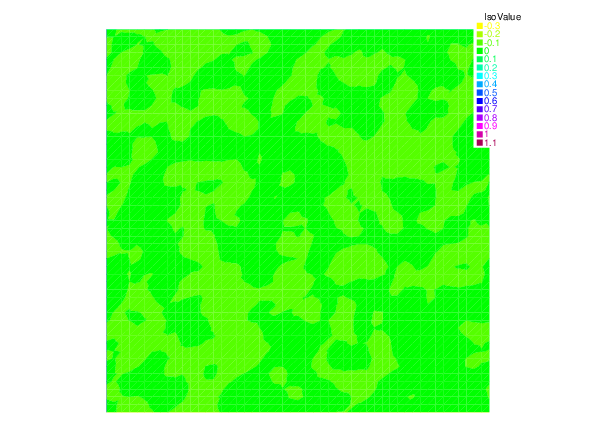}};
\draw[->,thick] (10,1) -- (6,1);
\end{scope}
\end{tikzpicture}
\end{center}
\caption{Numerical bifurcation diagram between $\e$ and $\sigma$ of system (\ref{Bar}) with $\xi=2$, $a=0.75$, $b=0.01$ and $\nu=1$ held fixed. The $\varepsilon$ and $\sigma$-step are respectively $0.005$ and $0.025$ but has been refined around boundaries to draw the curve boundaries.}\label{Fig_Bar_carre_bif}
\end{figure}
Let us notice that for small $\e$, that is when the transition between the quiescent and excited state is very sharp, small noise may powerfully initiate spike. However, we only observe reentrant patterns when $\e$ is large enough. Notice also that the separation curve between the zone NW and the two zones W and RW is exponentially shaped. This may be related to the large deviation theory for slow-fast system of SPDE.

\subsection{Numerical study of the Mitchel-Schaeffer model}

\subsubsection{The model}

Fitzhugh-Nagumo model is the most popular phenomenological model for cardiac cells. However this model has some flaws, in particular the hyperpolarization and the stiff slope in the repolarization phase. The Mitchell-Schaeffer model \cite{MS03} has been proposed to improve the shape of the action potential in cardiac cells. The spatial version of the Mitchell-Schaeffer model reads as follows
\begin{equation}\label{MS}
\left\{
\begin{array}{ccc}
{\rm d}u&=&\left[{\displaystyle \kappa\D u+\frac{v}{\tau_{\rm in}}u^2(1-u)-\frac{u}{\tau_{\rm out}}}\right]{\rm d}t+\sigma \dd W^{Q_\xi},\\
\dd v&=&\left[{\displaystyle\frac{1}{\tau_{\rm open}}\left(1-v\right)1_{u< u_{\rm gate}}-\frac{v}{\tau_{\rm close}}1_{u\geq u_{\rm gate}}}\right]\dd t.
\end{array}
\right.
\end{equation}
%This is a five parameter ionic model and bifurcations can be investigated with respect to these parameters. All the parameters have been introduced initially in order to be able to control easily the excitation threshold, the maximum and minimum values for the potential, the length of excitation of the traveling wave and so on...\\
The numerical scheme is based on the following variational formulation. Given $u_0$ and $v_0$ in $H^1(D)$, find $(u_n,v_n)$ such that for all $0\leq n\leq N-1$,
\begin{equation}\label{scheme_Bar_weak}
\left\{
\begin{array}{ccl}
(\frac{u_{n+1}-u_n}{\D t},\psi)+\kappa(\nabla u_{n+1},\nabla \psi)&=&\frac{1}{\e}(\frac{v_n}{\tau_{\rm in}}u^2_n(1-u_n)-\frac{1}{\tau_{\rm out}}u_n,\psi)+\frac{\sigma}{\sqrt{\D t}}(W^Q_{n+1}(1),\psi),\\
\frac{v_{n+1}-v_n}{\D t}&=&{\displaystyle \frac{1}{\tau_{\rm open}}\left(1-v_n\right)1_{u_n< u_{\rm gate}}-\frac{v_n}{\tau_{\rm close}}1_{u_n\geq u_{\rm gate}}}
\end{array}
\right.
\end{equation}
for $\psi\in H^1(D)$. More precisely, we solve this problem with the P1 finite element method.

\subsubsection{Numerical investigations}

Bifurcations have been investigated in Figure \ref{Fig_MS_carre_bif} for the same domain and boundary conditions as for the bifurcation diagram related to Barkley model (Figure \ref{Fig_Bar_carre_bif}).  We choose to fix all the parameters except the intensity of the noise $\sigma$ and $\tau_{\rm close}$ to investigate the influence of the strength of the noise and the characteristic time for the recovery variable $v$ to get closed. From repeated simulations, five distinct areas emerge:
\begin{itemize}
\item the area NW (for No Wave) where no wave is observed.
\item the area W (for Wave) where at least one wave is generated on average. These waves do not exhibit reentrant patterns. However, these waves may be generated with the same pattern as reentrant waves: two arms which meet up  and agree to form a ring.
\item the area RW (for Reentrant Wave) where waves with re-entry may be observed as in Figure \ref{Fig_Bar_carre}.
\item the area DW (for Disorganized Wave) where reentrant waves are initiated but break down in numerous pieces resulting in a very disorganized evolution. In a sense, this disorganized evolution may be regarded as reentrant since previously activated zone may be re-activated by one of these resulting pieces.
\item the area T (for Transition) is a transition area between reentrant waves and more disorganized patterns as observed in the area DW.
\end{itemize}

\begin{figure}
\begin{center}
\begin{tikzpicture}
\begin{scope}[scale=0.3]
\draw[very thin,gray] (0,0) grid (12.5,15.5);
\draw (1,0) node[below]{$3.5$};
\draw (10,0) node[below]{$4.4$};
\draw (0,1) node[left]{$0.05$};
\draw (0,6) node[left]{$0.10$};
\draw (0,14) node[left]{$0.18$};
\draw[thick,->] (0,0) -- (0,15.5) node[left]{$\sigma$};
\draw[thick,->] (0,0) -- (12.5,0) node[below]{$\tau_{{\rm close}}$};
\draw[orange,fill,opacity=0.5] (0.5,10.5).. controls +(0,-0.5) and +(-0.5,0)..(2.5,8.5).. controls +(0.5,0) and +(-0.5,0)..(5.5,8.5).. controls +(0.5,0) and +(-0.5,0)..(7.5,7.5).. controls +(0,0.5) and +(-0.5,0)..(7.75,11.5).. controls +(0,0.5) and +(-0.5,-0.5)..(9.5,12).. controls +(0.5,0.5) and +(-0.5,-0.5)..(12.5,13).. controls +(0,0.5) and +(0,-0.5)..(12.5,15.5).. controls +(-0.5,0) and +(0.5,0)..(0.5,15.5).. controls +(0,-0.5) and +(0,0.5)..(0.5,10.5);
\draw[green,fill,opacity=0.5](7.5,7.5).. controls +(0,0.5) and +(-0.5,0)..(7.75,11.5).. controls +(0,0.5) and +(-0.5,-0.5)..(9.5,12).. controls +(0.5,0.5) and +(-0.5,-0.5)..(12.5,13).. controls +(0,-0.5) and +(0,0.5)..(12.5,10.5).. controls +(-0.5,0) and +(0.5,0)..(11,9.5).. controls +(-0.5,0) and +(0.5,0)..(9.75,9.5).. controls +(-0.5,0) and +(0,0.5)..(9.5,6.5).. controls +(0.5,0) and +(0.5,0)..(9.25,6.5).. controls +(-0.5,0) and +(0.5,0)..(7.5,7.5);
\draw[blue,fill,opacity=0.5](12.5,10.5).. controls +(-0.5,0) and +(0.5,0)..(11,9.5).. controls +(-0.5,0) and +(0.5,0)..(9.75,9.5).. controls +(-0.5,0) and +(0,0.5)..(9.5,6.5).. controls +(0.5,0) and +(-0.5,0)..(12.5,6.5).. controls +(0,0.5) and +(0,-0.5)..(12.5,10.5);
\draw[green!50!orange,dashed,very thick](12.5,10.5).. controls +(-0.5,0) and +(0,-0.5)..(8.5,10.5).. controls +(0,0.5) and +(0,-0.5)..(8.5,11.75);
\draw (6,4) node[fill=white]{NW};
\draw (5,12) node[fill=orange!50]{DW};
\draw (8.5,9) node[fill=green!50]{RW};
\draw (11,8) node[fill=blue!50]{W};
\draw (10.5,11.5) node[fill=green!50]{T};
\draw (-4.5,3) node{\includegraphics[width=2cm]{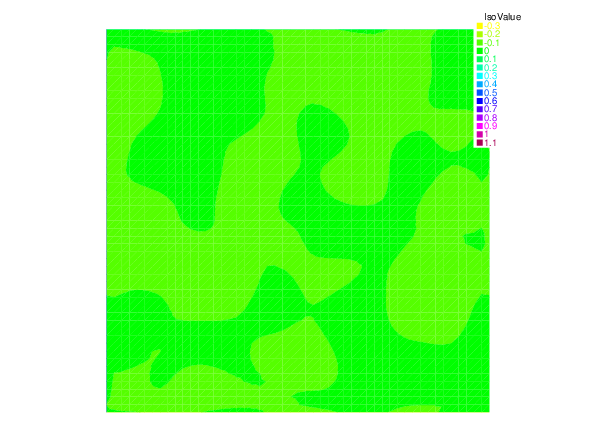}};
\draw[->,thick] (-1.5,3) -- (6,3);
\draw (-4.5,12) node{\includegraphics[width=2cm]{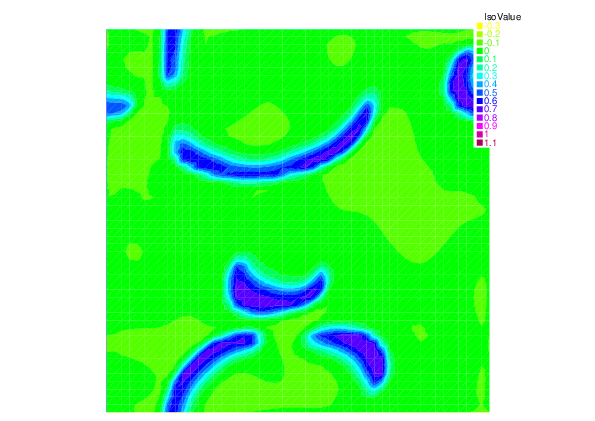}};
\draw[->,thick] (-1.5,12) -- (3,12);
\draw (16.5,9) node{\includegraphics[width=2cm]{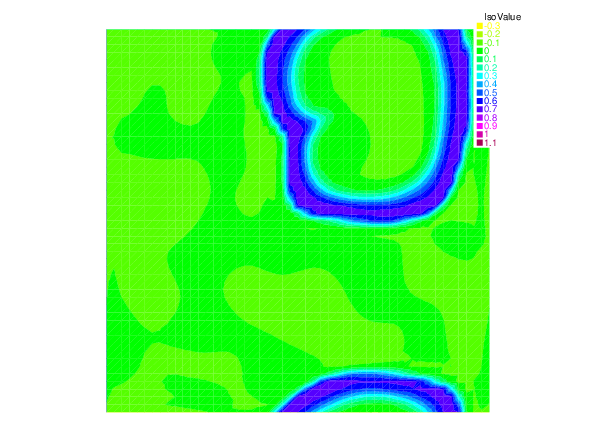}};
\draw[->,thick] (13.5,9) -- (11,9);
\draw (16.5,3) node{\includegraphics[width=2cm]{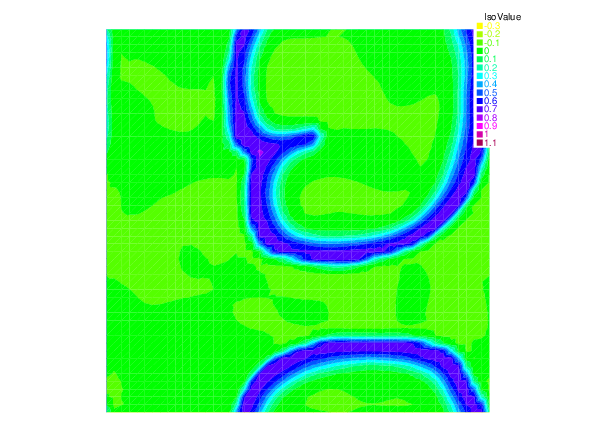}};
\draw[->,thick] (13.5,3) -- (8,8);
\draw (16.5,14) node{\includegraphics[width=2cm]{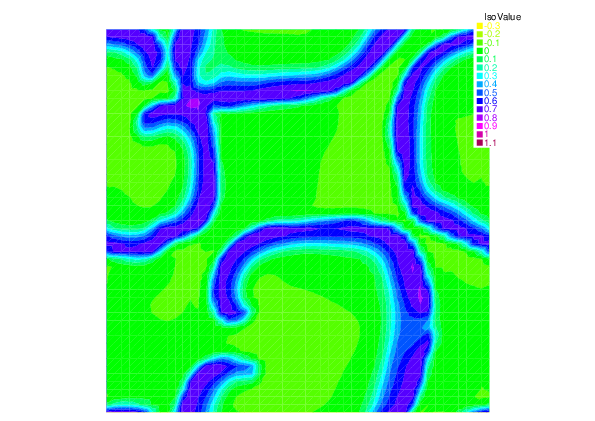}};
\draw[->,thick] (13.5,14) -- (12,12);
\end{scope}
\end{tikzpicture}
\end{center}
\caption{Numerical bifurcation diagram between $\tau_{\rm close}$ and $\sigma$ of system (\ref{MS}) with $\xi=2$, $\tau_{\rm in}=0.07$, $\tau_{\rm out}=0.7$, $\tau_{\rm open}=8.$, $u_{\rm gate}=0.13$ and $\nu=0.03$ held fixed.}\label{Fig_MS_carre_bif}
\end{figure}
We would like to end the paper with a short discussion about a problem we intend to address in future works. In \cite{JT10}, the authors present simulations of a stochastic spatially-extended Hodgkin-Huxley model. This is a celebrated  model of propagation and generation of an action potential in a nerve fiber. They consider the case of a one dimensional nerve fiber stimulated by a noisy signal. They show,  using numerical experiments, that the presence of weak noise in the model may powerfully annihilate the generation (but not the propagation) of waves. We reproduce these numerical experiments in our setting for the Barkley and Fitzhugh Nagumo models in order to investigate if this phenomena may be observed in two dimensions. Thus, we consider the Barkley model with a periodic deterministic input and driven by a colored noise. We perform simulations of this model using growing noise intensity. Unfortunately, we were not able to produce inhibition of a periodic deterministic signal using weak noises. Of course, with stronger intensity of the noise, signals due to the stochasticity of the system are generated and perturb the deterministic periodic signal. However, this phenomenon is not surprising. It seems to us that FitzHugh-Nagumo could be a better model to consider on the question of annihilate the generation of waves by weak noise. We noted that a lot of care is required when choosing the mesh to use as well as the parameters (intensity of the noise, intensity of the deterministic signal, duration of its period), if we want to produce sound results.

\begin{acknowledgements}
The authors would like to thank the anonymous reviewer for his/her valuable comments and suggestions to improve the quality of the paper.
\end{acknowledgements}

\appendix

\section{Proof of Theorem \ref{prop_glob_err}}\label{app_bgt_1}

Recall that the domain $D$ is polyhedral such that
\[
\overline{D}=\bigcup_{T\in\mathcal{T}_h}T.
\]
Let $i\in\{0,0_a,1\}$. The process $(D_h(t),~t\in[0,\tau])$ defined by
\[
D_h(t)=W^Q_t-W^{Q,h,i}_t
\]
is a centered Wiener process. In particular, it is a continuous martingale and thus, by the Burkholder-Davis-Gundy inequality (see Theorem 3.4.9 of \cite{PeZa07}) we have
\[
\EE\left(\sup_{t\in[0,\tau]}\|D_h(t)\|^2\right)\leq c_2\EE(\|D_h(\tau)\|^2)
\]
with $c_2$ a constant which does not depend on $h$ or $\tau$. We begin with the case $i=1$.  Since the processes $W^Q$ and $W^{Q,h,1}$ are regular in space, we write
\[
\EE(\|D_h(\tau)\|^2)=\EE\left(\int_D(W^Q_\tau(x)-W^{Q,h,1}_\tau(x))^2{\rm d}x\right).
\]
We use the definition of $W^{Q,h,1}$ in Definition \ref{Def_app} and the fact that $\sum_{i=1}^{N_h}\psi_i=1$  to obtain
\begin{eqnarray*}
\EE(\|D_h(\tau)\|^2)&=&\EE\left(\int_D(W^Q_\tau(x)-\sum_{i=1}^{N_h}W^Q_\tau(P_i)\psi_i(x))^2{\rm d}x\right)\\
&=&\EE\left(\int_D(\sum_{i=1}^{N_h}(W^Q_\tau(x)-W^Q_\tau(P_i))\psi_i(x))^2{\rm d}x\right)\\
&=&\EE\left(\int_D\sum_{i,j=1}^{N_h}(W^Q_\tau(x)-W^Q_\tau(P_i))(W^Q_\tau(x)-W^Q_\tau(P_j))\psi_i(x)\psi_{j}(x){\rm d}x\right).
\end{eqnarray*}
By an application of Fubini's theorem, exchanging over the expectation, integral and summation, we get
\begin{eqnarray*}
\EE(\|D_h(\tau)\|^2)&=&\sum_{i,j=1}^{N_h}\int_D\EE\left((W^Q_\tau(x)-W^Q_\tau(P_i))(W^Q_\tau(x)-W^Q_\tau(P_j))\right)\psi_i(x)\psi_{j}(x){\rm d}x\\
&=&\tau\sum_{i,j=1}^{N_h}\int_D\left(C(0)-C(P_i-x)-C(P_j-x)+C(P_i-P_j)\right)\psi_i(x)\psi_{j}(x){\rm d}x.
\end{eqnarray*}
For all $1\leq i,j\leq N_h$, if the intersection of the supports of $\psi_i$ and $\psi_j$ is not empty, then
\[
\forall x\in\text{supp} \psi_i,\forall y\in\text{supp}\psi_j, |x-y|\leq K h.
\]
Thus, there exists $K>0$ such that, for all $i,j$, if $\text{supp} \psi_i\cap\text{supp} \psi_{j}\neq \emptyset$ and $x\in \text{supp} \psi_i\cap\text{supp} \psi_{j}$, a Taylor's expansion yields
\[
|C(0)-C(P_i-x)-C(P_j-x)+C(P_i-P_j)|\leq K\max_{x\in\overline{D}}\|{\rm Hess}~C(x)\| h^2,
\]
where we have used the fact that $\nabla C(0)=0$. Then,
\[\EE(\|D_h(\tau)\|^2)\leq K\tau\max_{x\in\overline{D}}\|{\rm Hess}~C(x)\| h^2.
\]
This ends the proof for the case $i=1$. The case $i=0$ can be treated similarly.\\
For the case $i=0_a$, we proceed as follows. The process $W^{Q,h,0_a}$ is the orthonormal projection of $W^Q$ on the space P0, thus, we have, using the Pythagorean theorem,
\[
\EE(\|D_h(\tau)\|^2)=\EE\left(\|W^Q_\tau-W^{Q,h,0_a}_\tau\|^2\right)=\EE\left(\|W^Q_\tau\|^2-\|W^{Q,h,0_a}_\tau\|^2\right).
\]
Then, recalling that the processes $W^Q$ and $W^{Q,h,0_a}$ are regular in space and using the fact that the triangles $T\in\mathcal{T}_h$ do not intersect, we obtain,
\begin{align*}
\EE(\|D_h(\tau)\|^2)&=\EE\left(\int_DW^Q_\tau(x)^2{\rm d} x-\sum_{T\in\mathcal{T}_h}\frac{1}{|T|}(W^Q_\tau,1_T)^2\right).
\end{align*}
By an application of Fubini's theorem, exchanging over the expectation and summation, we get
\begin{equation}\label{bgt_pass}
\EE(\|D_h(\tau)\|^2)=\tau\left(C(0)|D|-\sum_{T\in\mathcal{T}_h}\frac{1}{|T|}(Q1_T,1_T)\right).
\end{equation}
Since $\overline{D}=\bigcup_{T\in\mathcal{T}_h}T$ we have
\[
C(0)|D|=\sum_{T\in\mathcal{T}_h}\frac{1}{|T|}\int_T\int_TC(0){\rm d}z_1{\rm d}z_2,
\]
hence, plugging in (\ref{bgt_pass})
\begin{equation}\label{eq_1}
\EE(\|D_h(\tau)\|^2)=\tau\sum_{T\in\mathcal{T}_h}\frac{1}{|T|}\int_T\int_T[C(0)-C(z_1-z_2)]{\rm d}z_1{\rm d}z_2.
\end{equation}
Thanks to the fact that $\nabla C=0$, a Taylor's expansion yields
\begin{equation}\label{eq_C_p}
C(0)-C(z_1-z_2)=(z_1-z_2)\cdot{\rm Hess}~C(0)(z_1-z_2)+{\rm o}(|z_1-z_2|^2).
\end{equation}
Thus, thanks to  (\ref{hyp_triangle}), for all $z_1,z_2$ in the same triangle $T$
\[
|C(0)-C(z_1-z_2)|\leq K\max_{x\in\overline{D}}\|{\rm Hess}~C(x)\| h^2,
\]
where the constant $K$ is independent from $T\in\mathcal{T}_h$. Plugging in (\ref{eq_1}) yields
\[
\EE(\|D_h(\tau)\|^2)\leq K\max_{x\in\overline{D}}\|{\rm Hess}~C(x)\|\tau h^2
\]
for a deterministic constant $K$.

%\bibliographystyle{spmpsci}
%\bibliography{Boulakia_Genadot_Thieullen.bib}

\begin{thebibliography}{10}
\providecommand{\url}[1]{{#1}}
\providecommand{\urlprefix}{URL }
\expandafter\ifx\csname urlstyle\endcsname\relax
  \providecommand{\doi}[1]{DOI~\discretionary{}{}{}#1}\else
  \providecommand{\doi}{DOI~\discretionary{}{}{}\begingroup
  \urlstyle{rm}\Url}\fi

\bibitem{ANZ98}
Allen, E., Novosel, S., Zhang, Z.: Finite element and difference approximation of some linear stochastic partial differential equations.
\newblock Stochastics: Int. J. Probab. and Stoch. Proc. \textbf{64}(1-2),
  117--142 (1998)


\bibitem{BSK81}
Babuska, I., Szabo, B., Katz, I.: The p-version of the finite element method.
\newblock SIAM J. Numer. Anal. \textbf{18}(3), 515--545 (1981)

\bibitem{B91}
Barkley, D.: A model for fast computer simulation of waves in excitable media.
\newblock Physica D: Nonlinear Phenom. \textbf{49}(1-2), 61--70 (1991)

\bibitem{B92}
Barkley, D.: Linear stability analysis of rotating spiral waves in excitable
  media.
\newblock Phys. Rev. Lett. \textbf{68}(13), 2090--2093 (1992)

\bibitem{B94}
Barkley, D.: Euclidean symmetry and the dynamics of rotating spiral waves.
\newblock Phys. Rev. Lett. \textbf{72}(1), 164--167 (1994)

\bibitem{B90}
Barkley, D., Kness, M., Tuckerman, L.: Spiral-wave dynamics in a simple model of excitable media: The transition from simple to compound rotation.
\newblock Physic. Rev. A \textbf{42}(4), 2489--2492 (1990)

\bibitem{BG06}
Berglund, N., Gentz, B.: Noise-induced phenomena in slow-fast dynamical systems: a sample-paths approach, vol. 246.
\newblock Springer Berlin (2006)

\bibitem{BoMa08}
Bonaccorsi, S., Mastrogiacomo, E.: Analysis of the stochastic
  {F}itzhugh–{N}agumo system.
\newblock Infin. Dimens. Anal., Quantum Probab. Rel. Top. \textbf{11}(3),
  427--446 (2008)



\bibitem{BCFGZ10}
Boulakia, M., Cazeau, S., Fern{\'a}ndez, M., Gerbeau, J.F., Zemzemi, N.: Mathematical modeling of electrocardiograms: a numerical study.
\newblock Ann. Biomed. Eng. \textbf{38}(3), 1071--1097 (2010)

\bibitem{Br12}
Br{\'e}hier, C.E.: Strong and weak order in averaging for {SPDE}s.
\newblock Stoch. Proc. Appl. \textbf{122}(7), 2553-2593 (2012)

\bibitem{CYY07}
Cao, Y., Yang, H., Yin, H.: Finite element methods for semilinear elliptic stochastic partial differential equations.
\newblock Numer. Math. \textbf{106}, 181--198 (2007)

\bibitem{CF09}
Cerrai, S., Freidlin, M.: Averaging principle for a class of stochastic reaction--diffusion equations.
\newblock Probab. Theory Rel. Fields \textbf{144}(1-2), 137--177 (2009)

\bibitem{CL91}
Ciarlet, P., Lions, J.: Finite Element Methods, \emph{Handbook of Numerical Analysis}, vol.~2.
\newblock Elsevier (1991)

\bibitem{DaPa04}
Da~Prato, G.: Kolmogorov Equations for Stochastic {PDE}s.
\newblock Birkhäuser Basel (2004)

\bibitem{DaZa92}
Da~Prato, G., Zabczyk, J.: Stochastic equations in infinite dimensions.
\newblock Cambridge University Press, Cambridge (1992)

\bibitem{DePr09}
Debussche, A., Printems, J.: Weak order for the discretization of the stochastic heat equation.
\newblock Math. Comput. \textbf{78}(266), 845--863 (2009)

\bibitem{Fi69}
Fitzhugh, R.: Mathematical models of excitation and propagation in nerve.
\newblock Biological Engineering (1969)

\bibitem{GMV12}
Gouden{\`e}ge, L., Martin, D., Vial, G.: High order finite element calculations for the {C}ahn-{H}illiard equation.
\newblock J. Sci. Comput. \textbf{52}(2), 294--321 (2012)

\bibitem{HRW12}
Hairer, M., Ryser, M., Weber, H.: Triviality of the 2d stochastic {A}llen-{C}ahn equation.
\newblock Electron. J. Probab. \textbf{17}, no. 39, 1--14 (2012)

\bibitem{FF}
Hecht, F., Le~Hyaric, A., Ohtsuka, K., Pironneau, O.: Freefem++, finite
  elements software


\bibitem{Hi02}
Hinch, R.: An analytical study of the physiology and pathology of the propagation of cardiac action potentials.
\newblock Progress in Biophys. and Mol. Bio. \textbf{78}(1), 45 -- 81 (2002)

\bibitem{HH52}
Hodgkin, A., Huxley, A.: Propagation of electrical signals along giant nerve
  fibres.
\newblock Proc. Roy. Soc. London. \textbf{140}(899), 177--183 (1952)

\bibitem{J09}
Jentzen, A.: Pathwise numerical approximations of {SPDE}s with additive noise under non-global lipschitz coefficients.
\newblock Potential Anal. \textbf{31}(4), 375--404 (2009)

\bibitem{JR10}
Jentzen, A., R{\"o}ckner, M.: A {M}ilstein scheme for {SPDE}s.
\newblock arXiv preprint arXiv:1001.2751  (2012)

\bibitem{JC06}
Jordan, P., Christini, D.: Cardiac Arrhythmia.
\newblock John Wiley \& Sons, Inc. (2006)

\bibitem{K80}
Keener, J.: Waves in excitable media.
\newblock J. Appl. Math. \textbf{39}(3), 528--548 (1980)

\bibitem{KLL10}
Kov{\'a}cs, M., Larsson, S., Lindgren, F.: Strong convergence of the finite element method with truncated noise for semilinear parabolic stochastic equations with additive noise.
\newblock Numer. Algorithms \textbf{53}(2-3), 309--320 (2010)

\bibitem{Kr14}
Kruse, R.: Optimal error estimates of Galerkin finite element methods for stochastic partial differential equations with multiplicative noise.
\newblock IMA J. of Numer. Anal. \textbf{34}(1), 217--251 (2014)

\bibitem{KL12}
Kruse, R., Larsson, S.: Optimal regularity for semilinear stochastic partial differential equations with multiplicative noise.
\newblock Electron. J. Probab., 17(65), 1--19 (2012).

\bibitem{LiGaNeSc04}
Lindner, B., Garcia-Ojalvo, J., Neiman, A., Schimansky-Geier, L.: Effects of
  noise in excitable systems.
\newblock Phys. Rep. \textbf{392}(6), 321--424 (2004)

\bibitem{LT10}
Lord, G., Tambue, A.: A modified semi-implict {E}uler-{M}aruyama scheme for
  finite element discretization of {SPDE}s.
\newblock arXiv preprint arXiv:1004.1998  (2010)

\bibitem{LT11}
Lord, G., Tambue, A.: Stochastic exponential integrators for finite element
  discretization of {SPDE}s for multiplicative and additive noise.
\newblock IMA J. Numeric. Anal. \textbf{33}(2), 515--543 (2013)

\bibitem{LoTh12}
Lord, G., Thümmler, V.: Computing stochastic traveling waves.
\newblock SIAM J. Scientific Comput. \textbf{34}(1), 24--43 (2012)

\bibitem{MS03}
Mitchell, C., Schaeffer, D.: A two-current model for the dynamics of cardiac
  membrane.
\newblock Bull. Math. Bio. \textbf{65}(5), 767--793 (2003)

\bibitem{PeZa07}
Peszat, S., Zabczyk, J.: Stochastic Partial Differential Equations with {L}évy
  Noise.
\newblock Cambridge University Press (2007)

\bibitem{RaTh83}
Raviart, P., Thomas, J.: Introduction à l'analyse numérique des équations
  aux dérivées partielles.
\newblock Masson (1983)

\bibitem{Sh05}
Shardlow, T.: Numerical simulation of stochastic {PDE}s for excitable media.
\newblock J. Comput. Appl. Math. \textbf{175}(2), 429--446 (2005)

\bibitem{JT10}
Tuckwell, H., Jost, J.: Weak noise in neurons may powerfully inhibit the generation of repetitive spiking but not its propagation.
\newblock PLoS Comput. Bio. \textbf{6}(5) (2010)

\bibitem{Wa80}
Wagner, D.: Survey of measurable selection theorems: An update, \emph{Lecture Notes in Mathematics}, vol. 794.
\newblock Springer Berlin Heidelberg (1980)

\bibitem{Wa05}
Walsh, J.: Finite element methods for parabolic stochastic {PDE}s.
\newblock Potential Anal. \textbf{23}(1), 1--43 (2005)

\bibitem{WR12}
Wang, W., Roberts, A.: Average and deviation for slow--fast stochastic partial differential equations.
\newblock J. Differ. Equ.  \textbf{253}(5) 1265--1286 (2012)

\bibitem{Y05}
Yan, Y.: Galerkin finite element methods for stochastic parabolic partial differential equations.
\newblock SIAM J. Numeric. Anal. \textbf{43}(4), 1363--1384 (2005)

\end{thebibliography}

\end{document}